\documentclass[11pt,reqno]{amsart}

\usepackage[T1]{fontenc}
\usepackage[utf8]{inputenc}
\usepackage{amsmath}
\usepackage{amssymb}
\usepackage{mathtools}
\usepackage{microtype}
\usepackage[textwidth=14.5cm,textheight=22.5cm,centering]{geometry}
\usepackage{xcolor}
\usepackage{needspace}
\usepackage{tikz}
\usetikzlibrary{arrows.meta,calc,positioning,decorations.markings}
\usepackage[
  colorlinks=true,
  linkcolor=blue!55!black,
  citecolor=green!40!black,
  urlcolor=blue!65!black,
  pdftitle={Signed Dual Graphs and Face Spectral Gaps of Cayley 2-Complexes},
  pdfauthor={Bobo Hua; Yong Lin; Tao Wang; Haohang Zhang},
  pdfsubject={Face and coexact spectral gaps of Cayley 2-complexes},
  pdfkeywords={Cayley 2-complex; signed dual graph; face Laplacian; spectral gap; amenability}
]{hyperref}

\newtheorem{theorem}{Theorem}[section]
\newtheorem{proposition}[theorem]{Proposition}
\newtheorem{lemma}[theorem]{Lemma}
\newtheorem{corollary}[theorem]{Corollary}
\newtheorem{conjecture}[theorem]{Conjecture}
\theoremstyle{definition}
\newtheorem{definition}[theorem]{Definition}
\theoremstyle{remark}
\newtheorem{remark}[theorem]{Remark}

\DeclareMathOperator{\spec}{spec}
\DeclareMathOperator{\ran}{ran}
\newcommand{\cP}{\mathcal P}
\newcommand{\cQ}{\mathcal Q}
\newcommand{\cF}{\mathcal F}
\newcommand{\cH}{\mathcal H}

\newcommand{\C}{\mathbb C}
\newcommand{\R}{\mathbb R}
\newcommand{\Z}{\mathbb Z}

\title[Signed dual graphs and face spectral gaps]{Signed Dual Graphs and Face Spectral Gaps of Cayley 2-Complexes}

\author{Bobo Hua}
\address{School of Mathematical Sciences, LMNS, Fudan University, Shanghai 200433, China}
\email{bobohua@fudan.edu.cn}

\author{Yong Lin}
\address{Yau Mathematical Sciences Center and Department of Mathematical Sciences, Tsinghua University, Beijing 100084, China}
\email{yonglin@tsinghua.edu.cn}

\author{Tao Wang}
\address{Beijing International Center for Mathematical Research, Peking University, Beijing 100871, China}
\email{taowang25@pku.edu.cn}

\author{Haohang Zhang}
\address{Yau Mathematical Sciences Center and Department of Mathematical Sciences, Tsinghua University, Beijing 100084, China; Department of Mathematics and Statistics, P.O. Box 68 (Pietari Kalmin katu 5), FI-00014 University of Helsinki, Finland}
\email{haohang.x.zhang@helsinki.fi}

\date{\today}

\subjclass[2020]{Primary 20F65, 05C50; Secondary 05C22, 47A10, 57K20}

\keywords{Cayley \(2\)-complex, signed dual graph, face Laplacian, spectral gap, amenability}

\begin{document}

\begin{abstract}
We construct a weighted signed dual multigraph for the Cayley \(2\)-complex of a finite presentation and represent its \(\ell^2\) face Laplacian as a signed graph Laplacian with a diagonal potential. We prove that the face spectral gap above the kernel vanishes for the Baumslag--Solitar groups \(\operatorname{BS}(1,m)\), \(m\ge2\), using explicit finitely supported face chains. Our main vanishing theorem gives the same conclusion for every finitely presented amenable group with ordinary first Betti number at least two. 
\end{abstract}

\maketitle

\raggedbottom
\section{Introduction}
\label{sec:introduction}

The interplay between amenability, spectral gaps, and volume growth has long been a central theme in geometric group theory. F\o lner's criterion characterizes amenability through nonempty finite sets with arbitrarily small ratios of boundary size to volume \cite{Folner1955}. Combined with Dodziuk's discrete Cheeger inequality \cite{Dodziuk1984}, it explains Kesten's characterization: the \(\ell^2\) spectral bottom of a Cayley graph Laplacian vanishes precisely when the group is amenable \cite{Kesten1959,KestenMean1959}. Brooks proved the analogous result for universal covers of closed Riemannian manifolds \cite{Brooks1981}. A positive spectral bottom yields a positive isoperimetric constant and hence exponential growth. Amenable groups can also grow exponentially, so vanishing of the vertex spectral bottom cannot distinguish polynomial from exponential growth, motivating the study of higher degree Laplacians.

Dubashinskiy studied the relation between volume growth and the coexact \(1\)-Laplacian for infinite finitely presented groups, proving that a positive spectral gap forces exponential growth unless the group is virtually infinite cyclic \cite[Theorem~1.5]{Dubashinskiy2025}. In the amenable setting, where the vertex spectral bottom vanishes, he asked whether the Baumslag--Solitar groups \(\operatorname{BS}(1,m)\), \(m\ge2\), have a positive coexact gap \cite[p.~4]{Dubashinskiy2025}. 

We approach this question through the face Laplacian of the Cayley \(2\)-complex \(X=X_{\cP}\) associated with a finite presentation \(\cP=\langle S\mid R\rangle\) of \(\Gamma\) \cite{LyndonSchupp1977}. The face and coexact \(1\)-Laplacians on \(X\) have the same nonzero spectrum; comparison with Dubashinskiy's model is discussed in Section~\ref{sec:bs}. We construct a weighted signed dual multigraph \(X^\vee\) whose vertices are the faces of \(X\), indexed by \(\Gamma\times R\). Its signed graph Laplacian \(L_\sigma^\vee\), together with a diagonal potential \(V\), represents the face Laplacian.

Write \(\partial_2\) for the cellular boundary from faces to edges and \(\Delta_k\) for the unnormalized \(\ell^2\)-cellular Laplacians; precise definitions are given in Section~\ref{sec:cayley}. For a nonnegative self-adjoint operator \(A\), let \(\spec_+(A)=\spec(A)\setminus\{0\}\), and set
\[
\begin{aligned}
  \lambda_0(X)&=\inf\spec(\Delta_0), &
  \lambda_2^+(X)&=\inf\spec_+(\Delta_2),\\
  \lambda_{1,\mathrm{coex}}(X)
    &=\inf\spec\!\left(\Delta_1|_{\overline{\ran\partial_2}}\right).
\end{aligned}
\]
The first two quantities are the \emph{vertex gap} and \emph{face gap}; the third is the \emph{coexact gap}. For infinite \(\Gamma\), degree zero has no nonzero \(\ell^2\) constant mode, whereas degree two may contain \(\ell^2\) face chains whose boundaries cancel. This is why the face gap removes zero before taking the spectral bottom. Our first theorem identifies the face operator on the dual graph and relates its spectrum to that of the \(1\)-Laplacian.

\begin{theorem}
\label{thm:main}
Let \(\cP=\langle S\mid R\rangle\) be a finite presentation of \(\Gamma\). There is a weighted signed dual multigraph \(X^\vee\) on \(\Gamma\times R\) and a diagonal potential \(V\) such that
\[
  \Delta_2=L_\sigma^\vee+V.
\]
The nonzero spectrum of the \(1\)-Laplacian satisfies
\begin{equation*}
  \spec_+(\Delta_1)
  =\spec_+(\Delta_0)\cup\spec_+(L_\sigma^\vee+V).
\end{equation*}
Moreover,
\[
  \lambda_{1,\mathrm{coex}}(X)
  =\lambda_2^+(X)
  =\inf\spec_+(L_\sigma^\vee+V).
\]
\end{theorem}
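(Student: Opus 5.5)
The proof splits into three parts: a direct matrix computation for \(\Delta_2=\partial_2^*\partial_2\), the Hodge decomposition in degree one, and the standard fact that \(T^*T\) and \(TT^*\) have the same nonzero spectrum. Throughout we use the unnormalized \(\ell^2\) cellular Laplacians of \(X\) with \(\partial_3=0\), so that
\[
\Delta_2=\partial_2^*\partial_2,\qquad \Delta_1=\partial_1^*\partial_1+\partial_2\partial_2^*,\qquad \Delta_0=\partial_1\partial_1^*.
\]
All of these are bounded, because \(X\) has uniformly bounded local geometry: each edge lies in at most \(\sum_{r\in R}|r|\) faces.

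\textbf{Constructing \(X^\vee\) and \(V\).} For faces \(f=(g,r)\) and \(f'=(g',r')\) we compute the matrix entry
\[
\langle \partial_2 \delta_f,\partial_2\delta_{f'}\rangle=\sum_{e}[f:e][f':e],
\]
where \([f:e]\in\Z\) is the signed incidence number, counted with multiplicity because a relator may traverse an edge several times.

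For each edge \(e\) and each ordered pair of distinct face-occurrences through \(e\), we place one edge of the multigraph \(X^\vee\) between the corresponding faces. Its sign \(\sigma\) is the product of the two orientation signs, and its weight is \(1\). Occurrences of the same face \(f\) at different positions through \(e\) produce loops, or more simply contribute to the diagonal.

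We normalize the signed Laplacian as
\[
(L_\sigma^\vee\phi)(f)=\sum_{f'\sim f}w_{ff'}\bigl(\phi(f)-\sigma_{ff'}\phi(f')\bigr).
\]
Its off-diagonal entries are then \(-\sum w\sigma\). To match \(\Delta_2\), whose off-diagonal entries are \(+\sum_e[f:e][f':e]\), we set \(\sigma_{ff'}=-[f:e][f':e]\) for each shared edge occurrence. The diagonal potential is
\[
V(f)=\|\partial_2\delta_f\|^2-\deg_w^\vee(f),
\]
which recovers \(\Delta_2=L_\sigma^\vee+V\) entrywise. Both sides are bounded operators that agree on finitely supported chains, so they agree everywhere. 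The only care needed here is the bookkeeping of repeated letters and proper powers in relators, including cancellation in \(\partial_2\delta_f\). Defining \(V\) as the difference of diagonals absorbs all of it, and \(V\) takes finitely many values by \(\Gamma\)-equivariance.

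\textbf{Spectral identity.} The identity \(\partial_1\partial_2=0\) gives the orthogonal Hodge decomposition
\[
\ell^2(X^1)=\overline{\ran\partial_1^*}\oplus\overline{\ran\partial_2}\oplus\mathcal H^1 .
\]
\(\Delta_1\) preserves each summand. On \(\overline{\ran\partial_1^*}\) it equals \(\partial_1^*\partial_1\), on \(\overline{\ran\partial_2}\) it equals \(\partial_2\partial_2^*\), and on \(\mathcal H^1\) it vanishes. Hence
\[
\spec_+(\Delta_1)=\spec_+(\partial_1^*\partial_1)\cup\spec_+(\partial_2\partial_2^*).
\]
For a bounded operator \(T\), \(\spec(T^*T)\setminus\{0\}=\spec(TT^*)\setminus\{0\}\). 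One proof: for \(\lambda\neq0\), \((TT^*-\lambda)^{-1}\) is given by \(\lambda^{-1}\bigl(T(T^*T-\lambda)^{-1}T^*-1\bigr)\). Applying this with \(T=\partial_1^*\) shows \(\spec_+(\partial_1^*\partial_1)=\spec_+(\Delta_0)\), and with \(T=\partial_2\) shows \(\spec_+(\partial_2\partial_2^*)=\spec_+(\Delta_2)=\spec_+(L_\sigma^\vee+V)\). This proves the displayed union.

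\textbf{Gap equalities.} By the decomposition above, \(\Delta_1|_{\overline{\ran\partial_2}}=\partial_2\partial_2^*|_{\overline{\ran\partial_2}}\). Since \(\overline{\ran\partial_2}=(\ker\partial_2^*)^\perp\), this restriction is injective with dense range. Its spectrum is therefore \(\spec(\partial_2\partial_2^*)\) with the point \(0\) possibly removed, except that \(0\) stays in the spectrum when it is not isolated.

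The main subtlety is this treatment of \(0\). If \(0\) is an accumulation point of \(\spec_+(\partial_2\partial_2^*)\), then both infima equal \(0\). Otherwise \(0\) is isolated, the restriction is invertible, and its spectrum is exactly \(\spec_+(\partial_2\partial_2^*)\). In either case
\[
\lambda_{1,\mathrm{coex}}(X)=\inf\spec_+(\partial_2\partial_2^*)=\inf\spec_+(\Delta_2)=\lambda_2^+(X)=\inf\spec_+(L_\sigma^\vee+V).
\]
The same case split applies to \(\spec_+(\Delta_2)\). If \(\spec_+(\Delta_2)\) is empty, both sides are \(+\infty\) by convention.
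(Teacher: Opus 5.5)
Your proof is correct and has the same overall architecture as the paper's: you match $\Delta_2$ entrywise with a signed Laplacian plus a potential, use the weak Hodge decomposition and the equality of the nonzero spectra of $T^*T$ and $TT^*$, and treat the point $0$ separately. Two ingredients differ.

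\textbf{The dual graph.} You create one unit-weight edge for each pair of traversals of $e$ by distinct faces, with sign $-\epsilon\epsilon'$. Definition~\ref{def:dual-graph} instead first passes to the net incidences $\eta(e,F)$, so opposite traversals cancel. It then creates one edge per edge $e$ of $X$ and pair of faces, with weight $|\eta(e,F)\eta(e,G)|$ and sign $-\operatorname{sgn}(\eta(e,F)\eta(e,G))$. Both satisfy $\Delta_2=L_\sigma^\vee+V$, since the off-diagonal sums coincide and $V$ absorbs the diagonal, so the existence claim holds either way. The paper's graph is determined by the matrix of $\partial_2$ alone. Yours is finer and can differ from it, in both multigraph and potential, when a face boundary traverses an edge more than once.

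\textbf{The spectral step.} Your resolvent identity is more elementary than the paper's polar decomposition and suffices for the displayed union and the infimum identities. The polar decomposition gives more: the orthogonal equivalence $\Delta_1|_{\overline{\ran\partial_2}}\simeq\Delta_2|_{(\ker\partial_2)^\perp}$. This identifies the coexact operator with the face operator up to orthogonal equivalence, not just their nonzero spectra, and the gap identity then follows in one line.

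\textbf{Bookkeeping to clean up.} As written, your construction has a few slips.
\begin{itemize}
\item You first take $\sigma$ to be the product of the orientation signs and then its negative; only the negative is correct.
\item If you create edges for \emph{ordered} pairs while $L_\sigma^\vee$ sums over incident edges, every off-diagonal entry doubles, so use unordered pairs.
\item With unit weights, the sign of a traversal pair must be $-\epsilon\epsilon'\in\{\pm1\}$, not $-[f:e][f':e]$ built from net incidences. That quantity need not be $\pm1$, and assigning it to each traversal pair overcounts.
\end{itemize}
Finally, the step from ``$0$ is isolated'' to ``the restriction is invertible'' tacitly uses that an isolated spectral point of a self-adjoint operator is an eigenvalue, which injectivity excludes. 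The paper relies on the same fact, but you should state it.
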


The theorem allows us to study the face and coexact gaps together. Although the dual graph and numerical gap values depend on the presentation, vanishing of either gap is independent of the chosen finite presentation; see Proposition~\ref{prop:presentation-independence}. We first consider the amenable family
\[
  \operatorname{BS}(1,m)
  =\langle a,b\mid aba^{-1}=b^m\rangle,\qquad m\ge2,
\]
within the Baumslag--Solitar groups introduced in \cite{BaumslagSolitar1962}. By constructing finite face chains with explicitly controlled boundaries, we prove that their face spectral gaps vanish.

\begin{theorem}
\label{thm:bs-gap}
Let \(X_m\) be the Cayley \(2\)-complex of the standard presentation of \(\operatorname{BS}(1,m)\), \(m\ge2\). Then
\[
  \ker\partial_2=\{0\},\qquad
  \lambda_2^+(X_m)=\lambda_{1,\mathrm{coex}}(X_m)=0.
\]
\end{theorem}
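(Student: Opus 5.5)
Since the presentation has a single relator \(r=aba^{-1}b^{-m}\), the faces of \(X_m\) are indexed by \(\Gamma=\operatorname{BS}(1,m)\), and \(\partial_2\) acts on \(\ell^2(\Gamma)\) as right convolution by the Fox-derivative column \(\bigl(\partial r/\partial a,\partial r/\partial b\bigr)\). The proof then has two parts.

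\emph{Step 1: \(\ker\partial_2=\{0\}\).} For one-relator groups whose relator is not a proper power, Lyndon's identity theorem shows that the Cayley \(2\)-complex is aspherical, so \(\pi_2=H_2(X_m;\Z)=0\) and \(\partial_2\) is injective on finite chains. For \(\ell^2\) chains I would use that \(\operatorname{BS}(1,m)\) is amenable, hence all its \(\ell^2\)-Betti numbers vanish. Since \(X_m\) is a \(2\)-dimensional model of the classifying space, the reduced \(\ell^2\)-homology in degree \(2\) is just \(\ker\partial_2\). Its von Neumann dimension is therefore \(b_2^{(2)}=0\), which forces \(\ker\partial_2=0\).

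A more explicit route avoids Lyndon's theorem. The coefficient of \(\partial r/\partial a\) is \(1-aba^{-1}\), so for any \(c\in\ker\partial_2\) the \(a\)-edge component gives \(c\,(1-aba^{-1})=0\) in \(\ell^2(\Gamma)\). Here \(aba^{-1}=b^m\) has infinite order, so right multiplication by \(1-b^m\) is injective on \(\ell^2\): on each coset of \(\langle b^m\rangle\) it is \(1-\text{shift}\) on \(\ell^2(\Z)\), which has no kernel. Hence \(c=0\). This is clean, and I would use it as the main argument.

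\emph{Step 2: the gap vanishes.} Because \(\ker\partial_2=0\), we have \(\lambda_2^+=\inf\spec(\Delta_2)=\inf_{c\neq0}\|\partial_2c\|^2/\|c\|^2\). By Theorem~\ref{thm:main}, this also equals \(\lambda_{1,\mathrm{coex}}\). It therefore suffices to exhibit finitely supported face chains \(c_n\) with \(\|\partial_2 c_n\|/\|c_n\|\to0\). I would use the model of \(\operatorname{BS}(1,m)\) as affine maps \(x\mapsto m^kx+p\) with \(p\in\Z[1/m]\), with \(a\) acting by \(x\mapsto mx\) and \(b\) by \(x\mapsto x+1\). Each face is the relation square between the \(b\)-edge at level \(k+1\) and the \(m\) consecutive \(b\)-edges at level \(k\). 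Summing faces over a ``brick'' region, namely levels \(0\le k< N\) with horizontal extent of width \(W m^k\) at level \(k\), telescopes the boundary. What survives is the top row of \(b\)-edges, the bottom row, and the two vertical \(a\)-edge sides. All interior \(a\)-edges cancel, and the horizontal \(b\)-edges cancel level by level.

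This gives only \(|\partial c|\approx W+Wm^N+N\) against \(|c|\approx W m^N\), so the ratio does not go to zero, because the wide bottom row dominates. The fix is the main obstacle. I would replace the indicator by a weighted chain \(c_n=\sum_k f(k)\,\mathbf 1_{\text{level }k\text{ faces}}\), chosen so that the \(b\)-edge boundary at level \(k\) has coefficient \(f(k)-f(k-1)\) up to multiplicities. In addition I would use the tree-of-sheets structure of the Bass--Serre tree, taking many branches, so that the mass at level \(k\) is balanced by weights like \(m^{-k/2}\) across the \(m\)-fold branching going downward. This should make \(\|c_n\|^2\) grow like \(N\cdot(\text{width})\) while the horizontal boundary contributions are \(O(\text{width})\), with \(f\) a slowly varying tent of length \(N\). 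The vertical side contributions must be made small by taking the width \(W\) much larger than \(N\). I expect verifying this bookkeeping, that is, matching multiplicities so that the \(b\)-edge boundary is truly a discrete derivative of \(f\), to be the hardest part.

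A fallback exists if the explicit construction stalls. Vanishing of \(\lambda_{1,\mathrm{coex}}\) is equivalent to \(\ran\partial_2\) not being closed, and since \(\ker\partial_2=0\) this is equivalent to \(\partial_2\) not being bounded below. Amenability then reduces this to almost-invariant vectors for the Fox matrix. A Fourier or Følner-type argument on the abelian normal subgroup \(\Z[1/m]\) would produce approximate kernel vectors of \(1-b\) and \(1-a\) simultaneously. The explicit chains are preferable because the statement promises them.
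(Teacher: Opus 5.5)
Your Step~1 is correct, and your explicit route is exactly the paper's argument: the \(a\)-edge coefficient of \(\partial_2f\) is \(f(g)-f(gb^{-m})\), and an \(\ell^2\) function constant on the cosets \(g\langle b^m\rangle\) must vanish. The gap is in Step~2. You give no construction there, and the one you sketch provably fails. By Lemma~\ref{lem:bs-boundary}, the coefficient of \(e(g,b)\) in \(\partial_2f\) is \(f(ga^{-1})-\sum_{j=0}^{m-1}f(gb^{-j})\). If \(f\ge0\), all cross terms \(\langle R_{b^{-j}}f,R_{b^{-k}}f\rangle\) are nonnegative, so \(\|\sum_jR_{b^{-j}}f\|_2\ge\sqrt m\,\|f\|_2\). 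Since \(R_{a^{-1}}\) is an isometry, this gives \(\|\partial_2f\|_2\ge(\sqrt m-1)\|f\|_2\). Thus every nonnegative face chain has Rayleigh quotient at least \((\sqrt m-1)^2\). Your chains \(\sum_kf(k)\mathbf1_{\text{level }k}\) use a tent \(f\) and positive branch weights such as \(m^{-k/2}\), so they are nonnegative, and no bookkeeping can push their quotients to zero. Relatedly, once all \(m\) branches above a \(b\)-edge are included, its coefficient is \(w(x-1)-mw(x)\), not a discrete derivative. Your fallback fails for a similar reason. If \(f\) is almost invariant under right translation by \(a\) and \(b\), the \(b\)-edge component is close to \((1-m)f\), so such vectors are far from approximate kernel vectors of \(\partial_2\).

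The missing idea is an oscillating phase along the \(b\)-lines. In the affine model, the paper takes \(f(x,y)=m^{-x}\chi(y)\) for \(0\le x<N\) and \(y\in\{0,\ldots,M-1\}\), with \(\chi(y)=e^{2\pi iy/m}\). For \(m=2\) this is the real sign \((-1)^y\); in general, one of the real and imaginary parts works, since \(\Delta_2\) is real. At the finest level \(x=0\), the \(m\) faces whose lower side contains \(e((0,y),b)\) carry \(\chi(y),\chi(y-1),\ldots,\chi(y-m+1)\), which sum to zero. So the wide row you identified as the obstruction contributes no boundary. For \(x\ge1\), \(\chi\) is invariant under \(y\mapsto y-jm^x\), so the amplitude \(m^{-x}\), spread over all \(m^x\) branches, cancels every interior \(b\)-edge coefficient exactly. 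The \(a\)-edge terms cancel away from the cutoff by the \(m\)-periodicity of \(\chi\). What remains is the top row, contributing \(Mm^{-2(N-1)}\), plus \(O(1)\) cutoff errors in \(y\), while \(\|f\|_2^2\ge M\); choosing \(M=m^{2N}\) finishes the argument. Note that the mass sits essentially at the finest level, not spread over \(N\) levels as your tent heuristic assumes.
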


Corollary~\ref{cor:bs-bounded-cycle} transfers this vanishing to Dubashinskiy's bounded cycle operator and answers his question for this family. The vanishing can also be deduced from earlier homological results, as explained in Remark~\ref{rem:bs-homology}; our proof gives an explicit sequence of finite face chains and also establishes \(\ker\partial_2=\{0\}\).

We next prove a vanishing theorem for amenable groups with \(b_1(\Gamma)\ge2\). Here \(b_1(\Gamma)=\dim_{\R}H_1(\Gamma;\R)\), with \(H_1(\Gamma;\R)\) denoting group homology with trivial real coefficients. For the presentation complex \(K_{\cP}\), the Hurewicz and universal coefficient theorems \cite[Theorems~2A.1 and~3A.3]{Hatcher2002}, together with the group homology formula \cite[Theorem~6.1.12]{Weibel1994}, give natural isomorphisms
\[
 H_1(\Gamma;\R)
 \cong H_1(K_{\cP};\R)
 \cong (\Gamma/[\Gamma,\Gamma])\otimes_{\Z}\R.
\]
We use ordinary homology \(H_1(K_{\cP};\R)\) to interpret \(H_1(\Gamma;\R)\) and \(b_1(\Gamma)\) below.

\begin{theorem}
\label{thm:betti-two-gap}
Let \(\Gamma\) be an infinite amenable finitely presented group with \(b_1(\Gamma)\ge2\). For every finite presentation of \(\Gamma\), the associated Cayley \(2\)-complex \(X\) satisfies
\[
  \lambda_{1,\mathrm{coex}}(X)=\lambda_2^+(X)=0.
\]
\end{theorem}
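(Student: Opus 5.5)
The plan is to argue by contradiction, using the reformulation in Theorem~\ref{thm:main}. Assume \(\lambda_2^+(X)=\lambda_{1,\mathrm{coex}}(X)=\lambda>0\). On \(\overline{\ran\partial_2}\) we have \(\partial_1=0\), so \(\Delta_1\) restricts to \(\partial_2\partial_2^*\) there. A positive gap is then equivalent to \(\partial_2\) having closed range, with the quantitative bound
\[
 \|\partial_2 f\|^2\ge\lambda\|f\|^2\qquad\text{for all } f\perp\ker\partial_2 .
\]
In particular, every \(\ell^2\) 1-chain \(z\in\overline{\ran\partial_2}\) can be written \(z=\partial_2 f\) with \(\|f\|\le\lambda^{-1/2}\|z\|\).

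The second ingredient is amenability. Since \(\Gamma\) is infinite amenable, the Cheeger--Gromov theorem gives \(b_1^{(2)}=0\), so there are no nonzero \(\ell^2\)-harmonic 1-chains. The Hodge decomposition of \(\ell^2\) 1-chains then shows that every \(\ell^2\) 1-cycle lies in \(\overline{\ran\partial_2}\). Combined with the closed-range bound, \emph{every} \(\ell^2\) cycle \(z\) has an \(\ell^2\) filling \(f\) with \(\|f\|\le C\|z\|\). By Proposition~\ref{prop:presentation-independence}, vanishing of the gap does not depend on the presentation, so we may fix a convenient one. We may also take \(\ker\partial_2\) into account only through the orthogonal projection, which is harmless.

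Next we use \(b_1(\Gamma)\ge2\) to build a family of cycles with no cheap \(\ell^2\) filling. Choose two \(\R\)-linearly independent homomorphisms \(\varphi,\psi\colon\Gamma\to\R\), both vanishing on the relators. Each of them is then a cellular 1-cocycle on \(X\) that is invariant under the \(\Gamma\)-action. Let \(\gamma\) be a closed edge path representing an element \(g\) with \(\varphi(g)=0\) and \(\psi(g)\ne0\), which exists by linear independence. Let \(F_n\) be a F\o lner sequence, and set \(z_n=\sum_{h\in F_n}h\gamma\), normalized suitably. Each \(z_n\) is a finitely supported cycle with \(\|z_n\|^2\asymp|F_n|\).

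To obtain the contradiction we pair with a truncated cochain. Write \(f_n\) for the bounded filling of \(z_n\), so \(\partial_2 f_n=z_n\). Pair \(z_n\) with the cocycle \(\psi\) multiplied by a cutoff \(\chi\) adapted to the level sets of \(\varphi\), for instance the indicator of a slab \(\{a\le\varphi\le b\}\) that contains \(F_n\). On one side, \(\langle z_n,\chi\psi\rangle\) is of order \(|F_n|\,\psi(g)\), because each translate of \(\gamma\) contributes \(\psi(g)\). On the other side, \(\langle\partial_2 f_n,\chi\psi\rangle=\langle f_n,\delta(\chi\psi)\rangle\). Here \(\delta(\chi\psi)\) is supported only near the two boundary hyperplanes of the slab, since \(\delta\psi=0\). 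Cauchy--Schwarz then bounds this term by \(\|f_n\|\) times the \(\ell^2\) norm of a boundary-layer cochain. Choosing the slab width and \(F_n\) via the F\o lner property, with \(F_n\) also approximately \(\varphi\)-invariant, should make the boundary layer small relative to \(|F_n|^{1/2}\). This contradicts \(\|f_n\|\le C|F_n|^{1/2}\).

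The hard part is this final estimate. Since \(\psi\) is unbounded, the boundary-layer cochain \(\delta(\chi\psi)\) involves values of \(\psi\) of size up to the diameter of \(F_n\). So the slab in the \(\varphi\)-direction and the F\o lner sets must be balanced carefully against the growth of \(\psi\) on them. My first attempt would replace \(\psi\) by a cutoff-centered version \(\psi-\psi(h_0)\) on each \(\varphi\)-slab. I would also take \(F_n\) of product type along the map \((\varphi,\psi)\colon\Gamma\to\R^2\), so that the \(\R^2\) geometry of the image supplies the two independent directions the argument requires. This is exactly where \(b_1\ge2\) is used: with a single homomorphism there is no transverse direction in which to cut off.
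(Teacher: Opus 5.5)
There is a genuine gap, and it appears before the final estimate you flag. Your reduction is correct: a positive gap gives closed range, and Cheeger--Gromov gives \(\ker\partial_1=\overline{\ran\partial_2}\), so every \(\ell^2\) cycle \(z\) has a filling with \(\|f\|\le\lambda^{-1/2}\|z\|\). The obstruction step fails in three ways.

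First, the cycles \(z_n\) do not exist. \(X\) is simply connected, so a closed edge path in \(X\) represents \(1\). A path \(\gamma\) from \(1\) to \(g\) with \(\psi(g)\ne0\) is therefore not closed, and \(\sum_{h\in F_n}h\gamma\) has boundary \(\mathbf 1_{F_ng}-\mathbf 1_{F_n}\ne0\), so it has no filling.

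Second, no correction of \(z_n\) can give the claimed lower bound. On \(X\), the invariant cocycle \(\psi\) (value \(\psi(s)\) on each \(s\)-edge) equals \(\partial_1^*\Psi\) for the vertex function \(\Psi(h)=\psi(h)\). So every finitely supported cycle \(z\) lying inside the slab satisfies \(\langle z,\chi\psi\rangle=\langle\partial_1z,\Psi\rangle=0\), not something of order \(|F_n|\,\psi(g)\). A nonzero pairing can only come from where the cycle crosses the walls, and you have no control there.

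Third, each wall \(\varphi^{-1}(a)\) is a coset of the infinite normal subgroup \(\ker\varphi\). Hence \(\delta(\chi\psi)=\partial_2^*(\chi\psi)\) is in general nonzero on infinitely many faces and is not square-summable, so the Cauchy--Schwarz step gives no bound. Your worry that \(\psi\) is unbounded confuses the bounded cochain \(\psi\) with the vertex function \(\Psi\).

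More fundamentally, the pairing device only restates the goal. For an \(\ell^2\) cochain \(\omega\) and a cycle \(z\), both \(\langle z,\omega\rangle\) and \(\partial_2^*\omega\) depend only on the component of \(\omega\) in \(\overline{\ran\partial_2}=\ker\partial_1\). So your estimate succeeds exactly when there are \(\ell^2\) cycles \(\omega\) with \(\|\partial_2^*\omega\|/\|\omega\|\to0\), which is \(\lambda_{1,\mathrm{coex}}(X)=0\) itself. Nothing in your outline produces such cycles from \(b_1\ge2\).

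The paper does not construct test cycles. It multiplies F\o lner cutoffs by the characters \(\chi_z(g)=z^{-\varphi(g)}\) (Lemma~\ref{lem:folner-evaluation}), which turns \(\Delta_2^2-\varepsilon\Delta_2\ge0\) into \(B(z)^2-\varepsilon B(z)\ge0\) for \(B(z)=\partial_2(z)^*\partial_2(z)\). This forces \(\operatorname{rank}\partial_2(z)=s-b\) on all of \(S^1\), hence \(\operatorname{rank}_{\R(t)}\partial_2(t)=s-b\). The \(\R(t)\)-homology of the infinite cyclic cover then has dimension \(b-1>0\); this is where \(b_1\ge2\) is used. It follows that \(H_1(\ker\varphi;\R)\) is infinite-dimensional. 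Amenability enters a second time through Button's theorem, which gives a finite-index subgroup mapping onto \(F_2\), a contradiction. Your plan has no analogue of either the constant-rank step or this final group-theoretic input.
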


We initially sought to detect exponential growth in amenable groups through a positive face gap, since their vertex gap always vanishes. Our vanishing results, together with the examples in Section~\ref{sec:examples}, instead suggest that, outside the virtually cyclic class, a positive face gap may force nonamenability. We therefore propose the following conjecture.

\begin{conjecture}
\label{conj:coexact-implies-vertex}
Let \(\Gamma\) be an infinite finitely presented group that is not virtually cyclic, and let \(X\) be the Cayley \(2\)-complex of any finite presentation of \(\Gamma\). Then
\[
  \lambda_2^+(X)>0
  \quad\Longrightarrow\quad
  \lambda_0(X)>0.
\]
Equivalently, every amenable group in this class has zero face and coexact gaps.
\end{conjecture}

Sections~\ref{sec:cayley}--\ref{sec:spectral-transfer} build the operator framework, passing from the cellular setup to the signed dual representation of the face Laplacian, then to spectral transfer and presentation independence of gap vanishing. We apply this framework in Section~\ref{sec:bs} to answer Dubashinskiy's question for the Baumslag--Solitar groups. The homological and product arguments in Section~\ref{sec:amenable} extend the vanishing results to amenable groups with \(b_1\ge2\) and amenable products. The examples in Section~\ref{sec:examples} clarify the scope of the conjecture.

\section{Cellular setup}
\label{sec:cayley}

Let \(K_{\cP}\) be the presentation complex of \(\cP=\langle S\mid R\rangle\). Here \(R\) is an indexed family of words; repeated words label distinct cells. The complex has one vertex \(v\), one oriented edge for each \(s\in S\), and one oriented \(2\)-cell \(c_r\) attached along each indexed relator \(r\). Its universal cover, the simply connected covering space, is the Cayley \(2\)-complex.

For each \(r\), choose a map
\[
  \phi_r:(D^2,1)\longrightarrow(K_{\cP},v)
\]
whose interior parametrizes \(c_r\) and whose boundary is its attaching loop. Such a map is called a characteristic map. Marking \(1\in\partial D^2\), we require the positively oriented boundary to start there and read \(r\). The standard orientation of \(D^2\) then induces the orientation of \(c_r\) via \(\phi_r\).

Let \(p:X=X_{\cP}\to K_{\cP}\) be the universal covering projection. Choose a lift of \(v\) and identify \(X^{(0)}\) with \(\Gamma\). Under this identification, deck transformations, the covering symmetries of \(p\), act by left multiplication, \(g\mapsto hg\) for \(h\in\Gamma\), while the Cayley graph edges are defined by right multiplication by generators: \(e(g,s):g\to gs\). Left and right multiplication commute, since \(h(gs)=(hg)s\). Hence the deck transformation associated with \(h\) sends \(e(g,s)\) to \(e(hg,s)\), preserving adjacency, edge labels, and orientations. Write \(\cF(X)\) for the set of open \(2\)-cells of \(X\).

\begin{definition}
For \(g\in\Gamma\) and \(r\in R\), let
\[
  \widetilde\phi_{g,r}:D^2\longrightarrow X
\]
be the unique lift of \(\phi_r\) satisfying \(\widetilde\phi_{g,r}(1)=g\). The corresponding open \(2\)-cell is
\[
  F(g,r):=\widetilde\phi_{g,r}(\mathring D^2).
\]
We orient \(F(g,r)\) by lifting this orientation of \(c_r\), so its positively oriented boundary reads \(r\) starting at \(g\).
\end{definition}

The face \(F(g,r)\) is uniquely determined by the vertex \(g\) and the indexed relator \(r\), independently of the word chosen to represent \(g\).

Figure~\ref{fig:lifted-face} shows the commutative diagram for \(\cP=\langle a,b,c,d\mid abc\rangle\). The map \(\phi_r\) sends the positively oriented boundary \(\partial D^2\) successively along the petal edges \(a,b,c\) in the indicated directions, and maps the interior \(\mathring D^2\) homeomorphically onto the open \(2\)-cell \(c_r\), represented schematically by the blue shading.

\begin{figure}[htbp]
\centering
\begin{tikzpicture}[
  >=Latex,
  every node/.style={font=\small},
  oriented boundary/.style={
    postaction={decorate},
    decoration={markings,mark=at position .25 with {
      \arrow{Latex[length=1.7mm,width=1.1mm]}
    }}
  },
  source cell/.style={
    draw=blue!55!black,fill=blue!10,line width=1pt
  },
  shaded petal/.style={
    draw=blue!55!black,fill=blue!10,line width=.9pt
  },
  plain petal/.style={
    draw=black!55,fill=white,line width=.6pt
  },
  lifted face/.style={fill=blue!10,draw=none},
  cycle edge/.style={
    draw=blue!55!black,line width=.9pt,
    -{Latex[length=1.7mm,width=1.1mm]},
    shorten <=1.7pt,shorten >=1.7pt
  },
  relation germ/.style={draw=blue!50!black,line width=.65pt},
  d germ/.style={draw=black!48,line width=.55pt},
  map arrow/.style={line width=.4pt,-{Latex[length=1.6mm,width=1mm]}},
  marked vertex/.style={circle,fill=black,inner sep=1.25pt}
]
  % The source disk.
  \coordinate (d) at (-2.00,-1.20);
  \coordinate (one) at (-1.34,-1.20);
  \draw[source cell,oriented boundary] (d) circle (.66);
  \node at (d) {\(D^2\)};

  % Four generator petals; only the letters traversed by r=abc are shaded.
  \coordinate (v) at (3.35,-1.20);
  \foreach \ang/\lab in {315/d}{
    \begin{scope}[shift={(v)},rotate=\ang]
      \path[plain petal,oriented boundary]
        (0,0)
        .. controls (.36,-.18) and (.83,-.28) .. (1.05,0)
        .. controls (.83,.28) and (.36,.18) .. (0,0);
      \node[font=\scriptsize,text=black!70] at (1.24,0) {\(\lab\)};
    \end{scope}
  }
  \foreach \ang/\lab in {45/a,135/b,225/c}{
    \begin{scope}[shift={(v)},rotate=\ang]
      \path[shaded petal,oriented boundary]
        (0,0)
        .. controls (.36,-.18) and (.83,-.28) .. (1.05,0)
        .. controls (.83,.28) and (.36,.18) .. (0,0);
      \node[font=\scriptsize,text=blue!55!black] at (1.24,0) {\(\lab\)};
    \end{scope}
  }
  \node[blue!55!black,font=\scriptsize] at (4.70,-1.20)
    {\(r=abc\)};
  \node at (3.35,-2.43) {\(K_{\cP}\)};

  % The counterclockwise embedded lift of the abc-cycle.
  \coordinate (g) at (3.35,.25);
  \coordinate (ga) at (4.50,1.40);
  \coordinate (gab) at (2.20,1.40);
  \path[lifted face] (g)--(ga)--(gab)--cycle;

  % Together with the two cycle sides, these germs give eight directions.
  \foreach \ang in {0,180,225,315}
    \draw[relation germ] (g)--++(\ang:.42);
  \foreach \ang in {90,270}
    \draw[d germ] (g)--++(\ang:.42);
  \foreach \ang in {0,45,135,315}
    \draw[relation germ] (ga)--++(\ang:.42);
  \foreach \ang in {90,270}
    \draw[d germ] (ga)--++(\ang:.42);
  \foreach \ang in {45,135,180,225}
    \draw[relation germ] (gab)--++(\ang:.42);
  \foreach \ang in {90,270}
    \draw[d germ] (gab)--++(\ang:.42);

  \draw[cycle edge] (g)--node[right=1pt,font=\scriptsize] {\(a\)} (ga);
  \draw[cycle edge] (ga)--node[above=2pt,font=\scriptsize] {\(b\)} (gab);
  \draw[cycle edge] (gab)--node[left=1pt,font=\scriptsize] {\(c\)} (g);
  \node at (3.35,.9) {\(F(g,r)\)};
  \node[align=center] at (3.35,2.30)
    {\(X_{\cP}\)\\[-2pt]{\scriptsize local lift}};

  % The characteristic map, its lift, and the covering projection.
  \draw[map arrow] (one)--node[above,pos=.54] {\(\phi_r\)} (v);
  \draw[map arrow] (one) --
    node[above left,pos=.50] {\(\widetilde\phi_{g,r}\)} (g);
  \draw[map arrow] (g) to[bend left=12] node[right,pos=.52] {\(p\)} (v);

  % Redraw marked points and vertex labels.
  \node[marked vertex] at (one) {};
  \node[below right=1pt,font=\scriptsize] at (one) {\(1\)};
  \node[marked vertex] at (g) {};
  \node[marked vertex] at (ga) {};
  \node[marked vertex] at (gab) {};
  \node[font=\scriptsize] at (3.25,.55) {\(g\)};
  \node[font=\scriptsize] at (4.0,1.2) {\(ga\)};
  \node[font=\scriptsize] at (2.8,1.2) {\(gab\)};
  \node[marked vertex] at (v) {};
  \node[right=2pt,font=\scriptsize] at (v) {\(v\)};
\end{tikzpicture}
\caption{Lifting the characteristic map for \(r=abc\) to identify the face \(F(g,r)\).}
\label{fig:lifted-face}
\end{figure}

If
\[
  r=s_1^{\varepsilon_1}\cdots s_m^{\varepsilon_m}\in R,
  \qquad \varepsilon_j\in\{\pm1\},
\]
then the boundary path of \(F(g,r)\) starts at \(g_0=g\) and visits
\[
  g_j=gs_1^{\varepsilon_1}\cdots s_j^{\varepsilon_j}
  \quad(1\le j\le m).
\]
Since \(r=1\) in \(\Gamma\), we have \(g_m=g\). The \(j\)-th segment traverses \(e(g_{j-1},s_j)\) forward when \(\varepsilon_j=1\), and \(e(g_j,s_j)\) backward when \(\varepsilon_j=-1\).

By uniqueness of lifts,
\begin{equation}
\label{eq:face-parametrization}
  \cF(X)=\{F(g,r):g\in\Gamma,\ r\in R\},
  \qquad hF(g,r)=F(hg,r),
\end{equation}
with every face occurring once. Each \(r\in R\) labels a free \(\Gamma\)-orbit, independently of orientations.

Chains and Hilbert spaces are real unless complex test functions are specified. Let \(C_k^{(2)}(X)\) be the Hilbert space of square-summable cellular \(k\)-chains, with the chosen oriented cells as an orthonormal basis. The parametrizations of vertices, edges, and faces above give the isometric identifications
\[
  C_0^{(2)}(X)\cong\ell^2(\Gamma),\qquad
  C_1^{(2)}(X)\cong\ell^2(\Gamma)^{|S|},\qquad
  C_2^{(2)}(X)\cong\ell^2(\Gamma)^{|R|}.
\]
Thus a face chain is a tuple \((f_r)_{r\in R}\), where \(f_r(g)\) is the coefficient of the face \(F(g,r)\). Write \(\partial_k\) for the cellular boundary map on finitely supported chains.

For an oriented edge \(e\) of \(X\) and an oriented face \(F\), set
\begin{equation*}
  \eta(e,F):=\langle e,\partial_2F\rangle\in\Z.
\end{equation*}
Here \(\eta(e,F)\) counts forward minus backward traversals of \(e\) by the attaching loop of \(F\). We call \(F\) \emph{incident} with \(e\) when \(\eta(e,F)\ne0\). Opposite traversals cancel before forming the Laplacian, while repeated traversals in one direction retain their multiplicity in the boundary chain.

Deck transformations preserve the lifted cell orientations. Thus \(\partial_2\) is \(\Gamma\)-equivariant, and for \(h\in\Gamma\),
\begin{equation}
\label{eq:boundary-equivariance}
  \partial_2(hF)=h(\partial_2F),\qquad
  \eta(he,hF)=\eta(e,F).
\end{equation}

The finite generating set and total relator length bound the absolute row and column sums of the boundary matrices. Cauchy--Schwarz therefore gives a unique bounded extension \(\partial_k:C_k^{(2)}(X)\to C_{k-1}^{(2)}(X)\), \(k=1,2\). Since each relator boundary is closed, \(\partial_1\partial_2=0\), and these extensions form the chain complex
\[
  0\longleftarrow C_0^{(2)}(X)
  \xleftarrow{\partial_1}C_1^{(2)}(X)
  \xleftarrow{\partial_2}C_2^{(2)}(X)
  \longleftarrow0.
\]

Write \(\partial_k^*\) for the Hilbert space adjoint. The Hodge Laplacians are defined by
\begin{equation*}
  \Delta_0=\partial_1\partial_1^*,\qquad
  \Delta_1=\partial_1^*\partial_1+\partial_2\partial_2^*,\qquad
  \Delta_2=\partial_2^*\partial_2.
\end{equation*}
They are bounded and self-adjoint. Let \(E(X)=\{e(g,s):g\in\Gamma,\ s\in S\}\) be the oriented edge set, and let \(\delta_F\) be the face chain with coefficient \(1\) at \(F\) and \(0\) elsewhere. Then, for faces \(F,G\),
\begin{equation}
\label{eq:matrix-coefficients}
  \langle\delta_F,\Delta_2\delta_G\rangle
  =\langle\partial_2F,\partial_2G\rangle
  =\sum_{e\in E(X)}\eta(e,F)\eta(e,G).
\end{equation}
In particular,
\begin{equation}
\label{eq:q-def}
  q(F):=\|\partial_2F\|_2^2
  =\sum_{e\in E(X)}|\eta(e,F)|^2.
\end{equation}
The function \(q\) is \(\Gamma\)-invariant. If all nonzero incidences are \(\pm1\), then \(q(F)\) counts the edges with nonzero coefficient in \(\partial_2F\).

\section{The signed dual operator}
\label{sec:dual-graph}

We now construct a weighted signed dual multigraph from the incidences between edges and faces of \(X\). The construction records the individual terms in \eqref{eq:matrix-coefficients}, allowing us to express \(\Delta_2\) as a signed graph Laplacian \cite{Zaslavsky1982} plus a diagonal potential. Related signed Schr\"odinger representations for weighted simplicial complexes are given in \cite[Section~3.4]{BartmannKeller2025} and \cite[Theorem~6.3]{EnnaceurJadlaoui2026}.

\begin{definition}
\label{def:dual-graph}
The \emph{signed dual multigraph} \(X^\vee\) associated with \(X\) has one vertex for each face, so its vertex set is
\[
  V(X^\vee):=\cF(X).
\]
For every chosen oriented edge \(e\in E(X)\) and every unordered pair of distinct faces \(\{F,G\}\) satisfying \(\eta(e,F)\eta(e,G)\ne0\), introduce one dual edge \(\alpha=\alpha(e;F,G)\) joining \(F\) and \(G\). Its weight and sign are
\[
  \begin{aligned}
  w(\alpha)&:=|\eta(e,F)\eta(e,G)|,\\
  \sigma(\alpha)&:=-\operatorname{sgn}
    \bigl(\eta(e,F)\eta(e,G)\bigr).
  \end{aligned}
\]
In a coherently oriented surface tessellation, the minus sign makes dual edges positive because adjacent faces have opposite incidence signs. An edge incident with \(k\) faces produces \(\binom{k}{2}\) dual edges. If several edges of \(X\) are incident with the same pair of faces, they give distinct dual edges with the same endpoints, called \emph{parallel edges}.
\end{definition}

Nonunit weights occur already in \(\langle a,b\mid r,r^2\rangle\), where \(r=[a,b]\). The boundary of \(F(g,r^2)\) traverses a square twice, producing four parallel dual edges to \(F(g,r)\), each with weight \(2\) and sign \(-1\).

Figure~\ref{fig:dual-local} illustrates Definition~\ref{def:dual-graph} for three faces \(F_1,F_2,F_3\) incident with one edge \(e\). They become the vertices of a dual triangle, whose edges are \(\alpha_{ij}=\alpha(e;F_i,F_j)\). The weights and signs are given by the formulas above: incidences of the same sign give a negative dual edge, while incidences of opposite signs give a positive one.

\begin{figure}[htbp]
\centering
\begin{tikzpicture}[
  >=Latex,
  every node/.style={font=\small},
  original face/.style={draw=black!55,thick,fill opacity=.72},
  dual vertex/.style={
    circle,draw=blue!60!black,fill=blue!8,thick,
    minimum size=7mm,inner sep=1pt
  },
  dual edge/.style={draw=blue!65!black,thick}
]
  \begin{scope}[shift={(-4.55,0)}]
    \coordinate (p) at (0,-0.70);
    \coordinate (q) at (0,0.70);
    \filldraw[original face,fill=cyan!24]
      (p)--(q)--(-1.18,0.98)--(-1.08,-0.94)--cycle;
    \filldraw[original face,fill=orange!30]
      (p)--(q)--(1.18,0.98)--(0.82,-0.30)--cycle;
    \filldraw[original face,fill=green!25]
      (p)--(q)--(0.54,0.18)--(0.78,-1.03)--cycle;
    \node at (-0.72,0.05) {\(F_2\)};
    \node at (0.73,0.50) {\(F_1\)};
    \node at (0.43,-0.55) {\(F_3\)};
    \draw[black,very thick,-{Latex[length=2mm]}] (p)--(q);
    \node[left=2pt] at (0,0) {\(e\)};
    \node[font=\scriptsize] at (0,-1.38)
      {\textup{(a)} three incident faces};
  \end{scope}

  \draw[thick,-{Latex[length=2.2mm]}] (-2.95,0) --
    node[above,font=\scriptsize] {dualize} (-2.15,0);

  \begin{scope}[shift={(-0.75,0)}]
    \draw[draw=gray!70,fill=gray!7,thick]
      (0,0) ellipse (1.25 and 0.94);
    \node[dual vertex] (h1) at (0,0.52) {\(F_1\)};
    \node[dual vertex] (h2) at (-0.63,-0.38) {\(F_2\)};
    \node[dual vertex] (h3) at (0.63,-0.38) {\(F_3\)};
    \node[above] at (0,0.94) {\(e\)};
    \node[font=\scriptsize] at (0,-1.38)
      {\textup{(b)} dual vertices};
  \end{scope}

  \draw[thick,-{Latex[length=2.2mm]}] (0.72,0) -- (1.62,0);

  \begin{scope}[shift={(2.72,0)}]
    \node[dual vertex] (a1) at (0,0.88) {\(F_1\)};
    \node[dual vertex] (a2) at (-0.82,-0.55) {\(F_2\)};
    \node[dual vertex] (a3) at (0.82,-0.55) {\(F_3\)};
    \draw[dual edge] (a1)--node[left,font=\scriptsize]
      {\(\alpha_{12}\)} (a2);
    \draw[dual edge] (a2)--node[above,font=\scriptsize]
      {\(\alpha_{23}\)} (a3);
    \draw[dual edge] (a3)--node[right,font=\scriptsize]
      {\(\alpha_{31}\)} (a1);
    \node[font=\scriptsize] at (0,-1.38)
      {\textup{(c)} signed triangle};
  \end{scope}
\end{tikzpicture}
\caption{The dual triangle produced by an edge incident with three faces.}
\label{fig:dual-local}
\end{figure}

We use counting measure on dual vertices, so \(\|f\|_2^2=\sum_F|f(F)|^2\). The weighted degree is
\begin{equation}
\label{eq:dual-degree}
  d^\vee(F)
  :=\sum_{\alpha\ni F}w(\alpha)
  =\sum_e\sum_{\substack{G\ne F}}
  |\eta(e,F)\eta(e,G)|.
\end{equation}
Section~\ref{sec:cayley} gives uniform bounds on \(|\eta(e,F)|\), on the number of edges incident with each face, and on the number of faces incident with each edge. Hence the number and size of the nonzero terms in \eqref{eq:dual-degree} are uniformly bounded, so \(d^\vee\) is bounded.

For a finitely supported function \(f\) on \(V(X^\vee)=\cF(X)\), define the unnormalized weighted signed Laplacian by
\[
  (L_\sigma^\vee f)(F)
  :=\sum_{\alpha=\{F,G\}}w(\alpha)
  \bigl(f(F)-\sigma(\alpha)f(G)\bigr),
\]
where the sum runs over dual edges incident with \(F\), \(G\) denotes the other endpoint, and parallel edges are counted separately. By the degree bound, \(L_\sigma^\vee\) extends uniquely to a bounded self-adjoint operator on \(\ell^2(V(X^\vee))\), with quadratic form
\begin{equation}
\label{eq:signed-energy}
  \langle L_\sigma^\vee f,f\rangle
  =\sum_{\alpha=\{F,G\}}
  w(\alpha)|f(F)-\sigma(\alpha)f(G)|^2.
\end{equation}
Each unoriented dual edge occurs once in this sum, with its weight; parallel edges are counted separately. A positive dual edge contributes \(w(\alpha)|f(F)-f(G)|^2\), and a negative dual edge contributes \(w(\alpha)|f(F)+f(G)|^2\).

A weighted signed graph is \(\Gamma\)-periodic if \(\Gamma\) acts freely on vertices by automorphisms preserving weights and signs. Let \(D\) be a set of vertex-orbit representatives. The graph has \emph{finite range} if there is a finite \(K\subset\Gamma\) such that for all \(u,v\in D\) and \(k\in\Gamma\),
\[
 u\sim kv\quad\Longrightarrow\quad k\in K.
\]
Here \(\sim\) denotes adjacency, and \(v\) may equal \(u\). For a discussion of group-invariant finite-range random walks on graphs with finite quotient, see \cite{BekkaHarpeValette2008}.

\Needspace{3\baselineskip}
\begin{theorem}
\label{thm:periodic-dual}
\label{thm:dual-formula}
The graph \(X^\vee\) is \(\Gamma\)-periodic and finite-range, with one vertex layer for each relator. For one relator it is a signed Cayley multigraph. With \(V(F)=q(F)-d^\vee(F)\),
\[
 \Delta_2=L_\sigma^\vee+V.
\]
Consequently,
\[
 \|\partial_2f\|_2^2
 =\sum_{\alpha=\{F,G\}}w(\alpha)|f(F)-\sigma(\alpha)f(G)|^2
  +\sum_FV(F)|f(F)|^2.
\]
\end{theorem}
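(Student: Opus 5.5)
The plan is to compare matrix coefficients of both sides, starting from \eqref{eq:matrix-coefficients}, and then to read off the structural claims from the equivariance \eqref{eq:boundary-equivariance}.

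\emph{Step 1: the identity \(\Delta_2=L_\sigma^\vee+V\).} All three operators are bounded and self-adjoint, so it suffices to compare \(\langle\delta_F,\cdot\,\delta_G\rangle\) for all faces \(F,G\).

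For \(F\ne G\), equation \eqref{eq:matrix-coefficients} gives \(\sum_e\eta(e,F)\eta(e,G)\). Each nonzero term comes from exactly one dual edge \(\alpha(e;F,G)\). On the other side,
\[
\langle\delta_F,L_\sigma^\vee\delta_G\rangle=-\sum_{\alpha=\{F,G\}}w(\alpha)\sigma(\alpha)
=\sum_{\alpha}|\eta(e,F)\eta(e,G)|\operatorname{sgn}\bigl(\eta(e,F)\eta(e,G)\bigr),
\]
which equals the same sum, since \(w\sigma=-\eta(e,F)\eta(e,G)\). The potential \(V\) is diagonal and contributes nothing off the diagonal.

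For \(F=G\), the left side is \(q(F)\) by \eqref{eq:q-def}. The right side is \(d^\vee(F)+V(F)\), because the diagonal entry of \(L_\sigma^\vee\) is the weighted degree. These agree by the definition of \(V\).

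The quadratic-form identity then follows from \(\langle\Delta_2f,f\rangle=\|\partial_2f\|^2\) together with \eqref{eq:signed-energy}. This holds first for finitely supported \(f\) and extends to all of \(\ell^2\) by boundedness and density.

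\emph{Step 2: periodicity.} Let \(h\in\Gamma\) act on dual vertices by \(F\mapsto hF\) and on dual edges by \(\alpha(e;F,G)\mapsto\alpha(he;hF,hG)\). By \eqref{eq:boundary-equivariance}, \(\eta(he,hF)=\eta(e,F)\). So this map sends dual edges to dual edges bijectively, with inverse given by \(h^{-1}\), and it preserves weights and signs.

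Freeness on vertices, and the layer structure with one layer per relator, follow from \eqref{eq:face-parametrization}. Take \(D=\{F(1,r):r\in R\}\) as the set of orbit representatives.

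\emph{Step 3: finite range.} Suppose \(F(1,r)\sim kF(1,r')=F(k,r')\). Then the two faces share an incident edge \(e\). The vertices of \(\partial F(1,r)\) are prefixes of \(r\), so they lie in the ball \(B\) of radius \(|r|\) about \(1\). The vertices of \(\partial F(k,r')\) lie in \(kB'\), where \(B'\) is the ball of radius \(|r'|\).

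A shared edge forces these sets to meet, so \(k\in BB'^{-1}\). Hence \(k\) lies in the ball of radius \(2\max_r|r|\), which is finite. One should check that \(\eta(e,F)\ne0\) really makes \(e\) an edge of the boundary loop; this holds because a nonzero coefficient requires at least one traversal of \(e\).

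\emph{Step 4: one relator.} When \(R=\{r\}\), the vertex set is \(\Gamma\) via \(g\mapsto F(g,r)\). The action is the left regular action, and adjacency with its multiplicities, weights and signs is determined by the finite set \(K\) of offsets \(k\) with their labels. This is exactly a signed Cayley multigraph with connection set \(K\) (or \(K\setminus\{1\}\)). Symmetry of the set under \(k\mapsto k^{-1}\) follows by applying \(k^{-1}\) to a dual edge.

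\emph{Main obstacle.} The main difficulty I anticipate is bookkeeping rather than anything conceptual. Dual edges exist only for \emph{distinct} faces, so self-incidences of a face (a relator traversing an edge several times) are absorbed into \(q(F)\) and not into \(d^\vee\). Parallel edges have to be tracked so that the off-diagonal sum matches term by term. Finally, the dual graph need not be simple, so the phrase ``signed Cayley multigraph'' has to be interpreted as allowing repeated generators.
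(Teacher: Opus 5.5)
Your proposal is correct and follows essentially the same route as the paper: you match the off-diagonal coefficients through $w(\alpha)\sigma(\alpha)=-\eta(e,F)\eta(e,G)$ and the diagonal ones through $d^\vee+V=q$, and you read periodicity off the equivariance \eqref{eq:boundary-equivariance}. The only variation is that you bound the finite-range set $K$ explicitly by the word-metric ball of radius $2\max_r|r|$, whereas the paper simply invokes local finiteness of $X^\vee$ at the finite representative set $D$.
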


\begin{proof}
By \eqref{eq:face-parametrization}, the relation layers \(\Gamma\times\{r\}\), \(r\in R\), are the free \(\Gamma\)-orbits of \(X^\vee\). Dual edges may join vertices within or between layers. By \eqref{eq:boundary-equivariance}, left translation preserves dual edges, weights and signs, so \(X^\vee\) is \(\Gamma\)-periodic and \(q,d^\vee,V\) are \(\Gamma\)-invariant.

The finite presentation and local finiteness of \(X\) make \(X^\vee\) locally finite by construction. The finite representative set \(D=\{F(1,r):r\in R\}\) therefore has finitely many neighbors, each uniquely of the form \(kv\) with \(v\in D\) and \(k\in\Gamma\). These \(k\) form a finite set \(K\), proving finite range. With one relator, the action is transitive, giving a signed Cayley multigraph. The graph need not be connected.

For \(F\ne G\), the off-diagonal coefficients satisfy
\[
 \begin{aligned}
 \langle\delta_F,(L_\sigma^\vee+V)\delta_G\rangle
 &=-\sum_{\alpha=\{F,G\}}w(\alpha)\sigma(\alpha)\\
 &=\sum_e\eta(e,F)\eta(e,G)
 =\langle\delta_F,\Delta_2\delta_G\rangle.
\end{aligned}
\]
The diagonal coefficients agree by definition:
\[
 d^\vee(F)+V(F)=q(F)=\langle\delta_F,\Delta_2\delta_F\rangle.
\]
Since both operators are bounded, \(\Delta_2=L_\sigma^\vee+V\). The quadratic-form identity follows from \eqref{eq:signed-energy}.
\end{proof}

Changing face orientations conjugates \(L_\sigma^\vee+V\) by a diagonal operator with entries \(\pm1\); changing edge orientations leaves it unchanged. Its spectrum is therefore independent of the chosen cell orientations.

For surface groups, the dual formula turns the face spectral problem into an ordinary graph spectral problem on the dual graph.

\begin{corollary}
\label{cor:surface-gaps}
Let \(\Gamma_g\) be the fundamental group of a closed orientable surface of genus \(g\ge1\), with presentation
\[
 \Gamma_g=\left\langle a_1,b_1,\ldots,a_g,b_g
       \ \middle|\ \prod_{j=1}^g[a_j,b_j]\right\rangle,
\]
where \([a,b]=aba^{-1}b^{-1}\). The associated Cayley \(2\)-complex \(X_g\) has \(\lambda_0(X_g)=\lambda_2^+(X_g)=0\) in the torus case \(g=1\), and both gaps are positive for \(g\ge2\).
\end{corollary}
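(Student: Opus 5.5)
Throughout, I take the claim of Corollary~\ref{cor:surface-gaps} to be: the torus has both gaps zero, and the genus \(g\ge2\) surface has both gaps positive.

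\textbf{Step 1: reduce to the dual graph.} The presentation complex of \(\Gamma_g\) is the closed surface \(\Sigma_g\), so \(X_g\) is its universal cover: a tessellation of the plane (\(g=1\)) or of the hyperbolic plane (\(g\ge2\)) by \(4g\)-gons. Every edge lies on exactly two faces, with opposite incidences \(\pm1\). Orientations can be chosen coherently, since deck transformations preserve the lifted orientations and the surface is orientable.

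By Definition~\ref{def:dual-graph}, every dual edge therefore has weight \(1\) and sign \(+1\). Each face has \(4g\) boundary edges, so \(q(F)=4g=d^\vee(F)\) and \(V=0\). One detail needs checking: the two sides of an edge must belong to distinct faces. This holds because the Cayley complex of a surface group is the planar tessellation, in which \(F(g,r)\) and \(F(gs,r)\)-type neighbours are different lifts.

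Theorem~\ref{thm:dual-formula} then gives \(\Delta_2=L^\vee\), an unsigned graph Laplacian. The dual graph \(X^\vee\) is the dual tessellation, which is \(\Gamma_g\)-periodic with one orbit. It is therefore a Cayley graph of \(\Gamma_g\) (or quasi-isometric to one), and it is connected.

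\textbf{Step 2: the kernel of \(\Delta_2\).} For a connected infinite graph, \(\ker L^\vee\) on \(\ell^2\) consists of \(\ell^2\) functions that are constant, hence is zero. So \(\ker\Delta_2=0\) and \(\lambda_2^+(X_g)=\inf\spec(L^\vee)\).

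\textbf{Step 3: apply Kesten.} By Kesten's theorem (via Dodziuk's Cheeger inequality), applied to the bounded-degree, cocompact, connected graph \(X^\vee\) quasi-isometric to \(\Gamma_g\), the bottom of \(\spec(L^\vee)\) vanishes if and only if \(\Gamma_g\) is amenable. The same criterion holds for \(\Delta_0\), the Laplacian of the Cayley graph.

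\(\Gamma_1=\Z^2\) is amenable, so \(\lambda_0(X_1)=\lambda_2^+(X_1)=0\). In this case one can also check directly: the dual graph is \(\Z^2\), and normalized indicator functions of large boxes give explicit test functions. For \(g\ge2\), \(\Gamma_g\) contains free subgroups and is nonamenable, so both gaps are positive.

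\textbf{Main obstacle.} The delicate step is Step 1: verifying that the cell structure of \(X_g\) really is the planar tessellation, with incidences \(\pm1\), two distinct faces per edge, and opposite signs under the lifted orientation. Concretely, each generator letter appears once as \(s\) and once as \(s^{-1}\) in \(\prod_j[a_j,b_j]\). Hence the edge \(e(h,s)\) is traversed forward by one face and backward by another, and the two faces are distinct because the relator word is cyclically reduced and the surface is embedded.

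Once this is in place, everything else is a quotation of Theorem~\ref{thm:dual-formula} together with Kesten's criterion, extended to vertex-transitive graphs quasi-isometric to \(\Gamma_g\). That extension holds because amenability of a graph is a quasi-isometry invariant.
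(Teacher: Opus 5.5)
Your proposal is correct and follows essentially the same route as the paper. Like the paper, you identify \(X^\vee\) with the unsigned dual tessellation, so that \(w=\sigma=1\), \(q=d^\vee=4g\), \(V=0\) and \(\Delta_2=L^\vee\); you get a trivial \(\ell^2\) kernel from connectedness and apply Kesten's theorem to both the dual and the primal Cayley graphs. The only differences are cosmetic: you cite free subgroups rather than the epimorphism \(\Gamma_g\twoheadrightarrow F_g\) for nonamenability, and you add an optional quasi-isometry hedge that is unnecessary, since the free transitive action already makes \(X^\vee\) a Cayley multigraph.
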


\begin{proof}
The universal cover is the usual polygonal tessellation \cite{LyndonSchupp1977}. Neighboring faces traverse each common edge in opposite directions, so
\[
 \eta(e,F)\eta(e,G)=-1,\qquad w(\alpha)=\sigma(\alpha)=1.
\]
Each face has \(4g\) boundary edges and \(4g\) dual edges. Theorem~\ref{thm:dual-formula} therefore gives
\[
 q(F)=d^\vee(F)=4g,\qquad V=0,\qquad \Delta_2=L_\sigma^\vee.
\]
Thus \(\Delta_2\) is the ordinary Laplacian of the connected dual Cayley multigraph of \(\Gamma_g\). Its \(\ell^2\) kernel is zero, since the graph is infinite and connected. Kesten's theorem yields
\[
 \begin{aligned}
 \lambda_0(X_g)>0
 &\quad\Longleftrightarrow\quad\Gamma_g\text{ is nonamenable}\\
 &\quad\Longleftrightarrow\quad\lambda_2^+(X_g)>0.
 \end{aligned}
\]
Now \(\Gamma_1\cong\Z^2\) is amenable. For \(g\ge2\), the epimorphism
\[
 \Gamma_g\twoheadrightarrow F_g,\qquad
 a_j\mapsto a_j,\quad b_j\mapsto1,
\]
shows that \(\Gamma_g\) is nonamenable.
\end{proof}

Dubashinskiy obtains the positive coexact gap for \(g\ge2\) in \cite[Proposition~5.4]{Dubashinskiy2025}. By Theorem~\ref{thm:spectral-transfer} below, this agrees with Corollary~\ref{cor:surface-gaps}.

\section{Spectral transfer and closed boundaries}
\label{sec:spectral-transfer}

To relate the face operator to the Hodge Laplacian on edges, decompose edge chains into limits of gradients, harmonic chains, and limits of face boundaries. We use the boundedness of \(\partial_1\) and \(\partial_2\), which follows from the finiteness of the presentation. Let \(\cH=\ker\partial_1\cap\ker\partial_2^*\) be the harmonic edge space. We have the weak Hodge decomposition \cite{Eckmann1945,BruningLesch1992}:
\begin{equation}
\label{eq:hodge-decomposition}
 C_1^{(2)}(X)=\overline{\ran\partial_1^*}\oplus\cH
             \oplus\overline{\ran\partial_2}.
\end{equation}

\begin{theorem}
\label{thm:spectral-transfer}
The summands in \eqref{eq:hodge-decomposition} reduce \(\Delta_1\), which vanishes on \(\cH\). On the other two summands,
\begin{align}
 \Delta_1|_{\overline{\ran\partial_1^*}}
 &\simeq\Delta_0|_{(\ker\partial_1^*)^\perp},
 \label{eq:exact-transfer}\\
 \Delta_1|_{\overline{\ran\partial_2}}
 &\simeq\Delta_2|_{(\ker\partial_2)^\perp}.
 \label{eq:coexact-transfer}
\end{align}
Here \(\simeq\) means orthogonal equivalence. In particular,
\begin{equation}
\label{eq:positive-spectrum}
 \spec_+(\Delta_1)=\spec_+(\Delta_0)\cup\spec_+(L_\sigma^\vee+V).
\end{equation}
\end{theorem}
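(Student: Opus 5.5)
The approach is standard Hilbert complex bookkeeping, using only that \(\partial_1,\partial_2\) are bounded and \(\partial_1\partial_2=0\).

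\textbf{Reduction.} I first check that each summand of \eqref{eq:hodge-decomposition} is invariant under \(\Delta_1\). Since \(\Delta_1\) is bounded and self-adjoint, invariance of the summands suffices for them to reduce it.

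\begin{itemize}
\item For \(x=\partial_1^*y\), we have \(\partial_2^*x=(\partial_1\partial_2)^*y=0\). Hence \(\Delta_1x=\partial_1^*\partial_1\partial_1^*y=\partial_1^*\Delta_0y\in\ran\partial_1^*\). Continuity of \(\Delta_1\) extends this to the closure.
\item For \(x=\partial_2z\), we have \(\partial_1x=0\). Hence \(\Delta_1x=\partial_2(\partial_2^*\partial_2z)=\partial_2\Delta_2z\in\ran\partial_2\), and again continuity gives the closure.
\item On \(\cH\), both terms \(\partial_1^*\partial_1\) and \(\partial_2\partial_2^*\) vanish, so \(\Delta_1|_{\cH}=0\).
\end{itemize}

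\textbf{Orthogonal equivalence.} For \eqref{eq:coexact-transfer}, I use the polar decomposition \(\partial_2=U|\partial_2|\), where \(|\partial_2|=\Delta_2^{1/2}\). The partial isometry \(U\) maps \((\ker\partial_2)^\perp=\overline{\ran\partial_2^*}\) unitarily onto \(\overline{\ran\partial_2}\); it is real because the operators are real. The identity \(\partial_2\partial_2^*=U(\partial_2^*\partial_2)U^*\) holds on all of \(C_1^{(2)}(X)\). Restricted to \(\overline{\ran\partial_2}\), the operator \(\Delta_1\) equals \(\partial_2\partial_2^*\), because \(\partial_1\) kills \(\ran\partial_2\). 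Moreover \(\Delta_2\) preserves \((\ker\partial_2)^\perp\). Together these give \(\Delta_1|_{\overline{\ran\partial_2}}=U\,\Delta_2|_{(\ker\partial_2)^\perp}\,U^*\).

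The same argument applied to \(\partial_1^*=V|\partial_1^*|\) yields \eqref{eq:exact-transfer}. On \(\overline{\ran\partial_1^*}\) we have \(\Delta_1=\partial_1^*\partial_1\), and this is unitarily equivalent via \(V\) to \(\partial_1\partial_1^*=\Delta_0\) restricted to \((\ker\partial_1^*)^\perp\).

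\textbf{Spectra.} The spectrum of an orthogonal direct sum of three bounded self-adjoint operators is the union of their spectra. For a nonnegative operator \(A\), write \(A=0\oplus A|_{(\ker A)^\perp}\). Then \(\spec_+(A)=\spec(A|_{(\ker A)^\perp})\setminus\{0\}\).

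Note that \(\ker\Delta_2=\ker\partial_2\) and \(\ker\Delta_0=\ker\partial_1^*\). Removing \(0\) from the union of spectra therefore gives
\[
\spec_+(\Delta_1)=\spec_+(\Delta_0)\cup\spec_+(\Delta_2).
\]
Theorem~\ref{thm:dual-formula} identifies \(\Delta_2\) with \(L_\sigma^\vee+V\), which yields \eqref{eq:positive-spectrum}.

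\textbf{Main obstacle.} The only delicate point is the closures. Zero may lie in the spectrum of the restrictions without being an eigenvalue, so one must not discard it prematurely. For this reason the equivalences are stated on \((\ker\cdot)^\perp\), and \(0\) is removed only at the final step.
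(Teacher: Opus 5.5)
Your proposal is correct and follows essentially the same route as the paper. Both use the block form of \(\Delta_1\) relative to the weak Hodge decomposition, which you justify more explicitly by checking invariance, and the polar decomposition of \(T=\partial_1^*\) and \(T=\partial_2\) to intertwine \(T^*T\) with \(TT^*\) on \((\ker T)^\perp\) and \(\overline{\ran T}\), discarding zero only at the end.
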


\begin{proof}
We briefly recall the proof of the weak Hodge decomposition first. The identity \(\partial_1\partial_2=0\) gives
\[
 \langle\partial_1^*u,\partial_2v\rangle
 =\langle u,\partial_1\partial_2v\rangle=0.
\]
Using \((\ran T)^\perp=\ker T^*\), we obtain
\[
 \bigl(\overline{\ran\partial_1^*}\oplus
       \overline{\ran\partial_2}\bigr)^\perp
 =\ker\partial_1\cap\ker\partial_2^*=\cH,
\]
which proves \eqref{eq:hodge-decomposition}. Relative to this decomposition,
\[
 \Delta_1=
 (\partial_1^*\partial_1)|_{\overline{\ran\partial_1^*}}
 \oplus0_{\cH}\oplus
 (\partial_2\partial_2^*)|_{\overline{\ran\partial_2}}.
\]

For a bounded operator \(T\) between real Hilbert spaces, its polar decomposition \(T=U(T^*T)^{1/2}\) \cite[Theorem~VIII.3.11]{Conway1990} gives a partial isometry \(U\) whose restriction is an orthogonal isomorphism:
\[
 U:(\ker T)^\perp\longrightarrow\overline{\ran T},\qquad
 U(T^*T)=(TT^*)U.
\]
Taking \(T=\partial_1^*\) and \(T=\partial_2\) proves \eqref{eq:exact-transfer} and \eqref{eq:coexact-transfer}, respectively. The omitted kernels and \(\cH\) contribute only zero, so \(\Delta_2=L_\sigma^\vee+V\) gives \eqref{eq:positive-spectrum}.
\end{proof}

By \eqref{eq:coexact-transfer}, the coexact gap is the spectral bottom of \(\Delta_2|_{(\ker\partial_2)^\perp}\), while the face gap omits zero. This restriction has no zero eigenvalue, so zero, if in its spectrum, is approached by positive spectrum. Removing zero leaves the infimum unchanged, so the dual formula gives
\begin{equation}
\label{eq:bottom-identity}
 \lambda_{1,\mathrm{coex}}(X)=\lambda_2^+(X)
 =\inf\spec_+(L_\sigma^\vee+V).
\end{equation}
We use the convention \(\inf\varnothing=+\infty\).

\begin{proof}[Proof of Theorem~\ref{thm:main}]
The dual construction and operator identity are established in Theorem~\ref{thm:dual-formula}. The spectral statement follows from Theorem~\ref{thm:spectral-transfer}, and the gap identity is~\eqref{eq:bottom-identity}.
\end{proof}

For the bounded boundary operator \(\partial_2\), the following three conditions are equivalent:
\begin{equation}
\label{eq:closed-range}
\begin{aligned}
 \lambda_2^+(X)>0
 &\quad\Longleftrightarrow\quad
 \exists C>0:\ \|f\|_2\le C\|\partial_2f\|_2
 \quad(\forall f\perp\ker\partial_2)\\
 &\quad\Longleftrightarrow\quad \ran\partial_2\text{ is closed}.
\end{aligned}
\end{equation}
The first equivalence follows from \(\langle\Delta_2f,f\rangle=\|\partial_2f\|_2^2\) and the spectral theorem \cite[Theorem~IX.2.2]{Conway1990}. The second is the closed-range theorem \cite[Proposition~XI.6.1]{Conway1990}.

Let \(C_c(\cF(X))\) denote the finitely supported face chains. If \(\partial_2\) is injective, all these chains satisfy \(f\perp\ker\partial_2\). Boundedness and density then give the formula used to prove gap vanishing in Section~\ref{sec:bs}:
\[
 \lambda_2^+(X)=\inf_{0\ne f\in C_c(\cF(X))}
                       \frac{\|\partial_2f\|_2^2}{\|f\|_2^2}.
\]
If the kernel is nontrivial, approximations to a nonzero kernel vector already make this unrestricted infimum zero. Small Rayleigh quotients alone therefore do not prove vanishing of the face gap.

\begin{proposition}
\label{prop:presentation-independence}
For two finite presentations \(\cP,\cQ\) of the same group,
\[
 \lambda_2^+(X_{\cP})>0\quad\Longleftrightarrow\quad
 \lambda_2^+(X_{\cQ})>0.
\]
By \eqref{eq:bottom-identity}, the same equivalence holds for the coexact gap.
\end{proposition}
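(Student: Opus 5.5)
By \eqref{eq:closed-range}, \(\lambda_2^+(X)>0\) holds exactly when \(\ran\partial_2\) is closed in \(C_1^{(2)}(X)\). By the Hodge decomposition \eqref{eq:hodge-decomposition} and Theorem~\ref{thm:spectral-transfer}, this is also equivalent to \(\partial_2^*\) having closed range, i.e.\ to the vanishing of the ``coexact torsion'' in degree one. The plan is to show that closedness of \(\ran\partial_2\) is a quasi-isometry/homotopy invariant of the \(\Gamma\)-equivariant \(\ell^2\) chain complex in low degrees. The standard route goes through Tietze transformations: any two finite presentations of \(\Gamma\) are related by finitely many moves of adding or deleting a generator together with a defining relator \(s w^{-1}\), and adding or deleting a relator that is a consequence of the others. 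It therefore suffices to check that each single move preserves closedness of \(\ran\partial_2\).

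\textbf{Relator move.} Let \(\cQ=\langle S\mid R\cup\{r'\}\rangle\) with \(r'\) trivial in \(\Gamma\). The edge spaces agree, and \(\ran\partial_2^{\cQ}=\ran\partial_2^{\cP}+\ran\partial_2|_{\ell^2(\Gamma)\delta_{r'}}\). Since \(X_\cP\) is simply connected, \(r'\) bounds a finite van Kampen diagram. So the boundary of \(F(1,r')\) equals \(\partial_2^{\cP}c\) for a finite face chain \(c\), and by equivariance the map \(\delta_{F(g,r')}\mapsto g c\) is a bounded \(\Gamma\)-equivariant operator \(\Phi\) with \(\partial_2^{\cP}\Phi=\partial_2^{\cQ}|_{r'\text{-layer}}\). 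Hence \(\ran\partial_2^{\cQ}=\ran\partial_2^{\cP}\) as sets, so closedness is preserved in both directions.

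\textbf{Generator move.} Let \(\cQ=\langle S\cup\{t\}\mid R\cup\{t w^{-1}\}\rangle\) with \(w\) a word in \(S\). I would build bounded equivariant chain maps \(f\colon C_*^{(2)}(X_\cP)\to C_*^{(2)}(X_\cQ)\) (inclusion) and \(h\colon C_*^{(2)}(X_\cQ)\to C_*^{(2)}(X_\cP)\). The map \(h\) sends \(e(g,t)\) to the path chain of \(w\) from \(g\), and sends the new faces \(F(g,tw^{-1})\) to \(0\). Then \(hf=\mathrm{id}\), and \(fh\) is chain homotopic to \(\mathrm{id}\) via the bounded equivariant homotopy \(D\colon e(g,t)\mapsto F(g,tw^{-1})\) in degree one (zero in degree zero). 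All these maps are finite-range convolution-type operators, hence bounded. Degree-two identities need care because \(\ker\partial_2\) may differ, but one only needs equality of images up to closure-compatible operations.

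\textbf{Abstract lemma.} Having bounded chain homotopy equivalences \(f,h\) with \(hf\simeq\mathrm{id}\) and \(fh\simeq\mathrm{id}\) on complexes that agree in degree \(\le\)1 up to homotopy, I would invoke the lemma that closedness of \(\ran\partial_2\) is equivalent to the reduced and unreduced \(\ell^2\)-homology \(H_1\) agreeing. Chain homotopy equivalences induce isomorphisms of unreduced homology \(\ker\partial_1/\ran\partial_2\) as topological vector spaces, and Hausdorffness of this quotient is exactly closedness of \(\ran\partial_2\). This lemma transfers the property across each move.

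The main obstacle is the rigorous bookkeeping in the generator move, namely checking that the homotopy formulas hold on the nose in degrees \(0,1\). Once that is verified, the topological-isomorphism argument is routine. The coexact statement then follows from \eqref{eq:bottom-identity}.
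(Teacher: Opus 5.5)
Your proposal is correct. Its first half is the paper's proof: the reduction to closedness of $\ran\partial_2$ via \eqref{eq:closed-range}, the Tietze reduction, and the relator move (an equivariantly translated van Kampen filling, giving $\ran\partial_2^{\cQ}=\ran\partial_2^{\cP}$) match exactly.

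You differ in the generator move. The paper stays in the edge space $C_1^{(2)}(X_{\cP})\oplus\ell^2(\Gamma)$. It computes $\widetilde\partial_2(\zeta,\xi)=(\partial_2^{\cP}\zeta-B_w\xi,\xi)$, with $B_w$ sending a new edge to its old $w$-path, and applies the bounded invertible shear $S(v,\xi)=(v+B_w\xi,\xi)$, which carries $\ran\widetilde\partial_2$ onto $\ran\partial_2^{\cP}\oplus\ell^2(\Gamma)$. You instead use a bounded equivariant chain homotopy equivalence. It induces a linear homeomorphism of the unreduced groups $\ker\partial_1/\ran\partial_2$, and Hausdorffness of these quotients is equivalent to closed range.

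The bookkeeping you flag as the main obstacle is immediate and holds in every degree, so your caveat about degree two is unnecessary:
\begin{itemize}
\item $h$ is a chain map, since $h\bigl(\partial_2F(g,tw^{-1})\bigr)=h\bigl(e(g,t)\bigr)-B_w\delta_g=0$;
\item $hf=\mathrm{id}$;
\item $\mathrm{id}-fh=\partial D+D\partial$, because $\partial_2D\,e(g,t)=e(g,t)-B_w\delta_g=(\mathrm{id}-fh)\,e(g,t)$ and $D\partial_2F(g,tw^{-1})=F(g,tw^{-1})=(\mathrm{id}-fh)F(g,tw^{-1})$, with both sides vanishing on old cells.
\end{itemize}
In fact the paper's shear is $S=fh+P$, where $P$ projects onto the new edge coordinate, so the two arguments are close relatives.

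The paper's version is shorter and avoids quotient topologies. Yours is more conceptual and more general: the same lemma shows that any bounded equivariant chain homotopy equivalence in degrees $\le2$ preserves closedness of $\ran\partial_2$. By simple connectivity, such an equivalence can be built directly between the Cayley complexes of any two finite presentations, bypassing Tietze moves altogether.
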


\begin{proof}
Tietze's theorem \cite[Proposition~II.2.1]{LyndonSchupp1977} connects any two finite presentations by adding or removing a redundant relator or a generator with its defining relator. By \eqref{eq:closed-range}, it suffices to show that the two additions preserve closedness of the boundary range.

First add a relator \(u\) implied by the old relators, obtaining \(\widetilde{\cP}=\langle S\mid R,u\rangle\) of the same group \(\Gamma\). The added faces \(F(g,u)\) are indexed by \(g\in\Gamma\), giving
\[
 C_2^{(2)}(X_{\widetilde{\cP}})
 \cong C_2^{(2)}(X_{\cP})\oplus\ell^2(\Gamma).
\]
The two summands represent the old and added face chains respectively. The van Kampen lemma \cite[Proposition~III.9.2]{LyndonSchupp1977} gives a finitely supported old face chain \(c\) whose boundary equals that of \(F(1,u)\). Define \(B:\ell^2(\Gamma)\to C_2^{(2)}(X_{\cP})\) by \(B(F(g,u))=g\cdot c\). Each coefficient of \(c\) contributes a multiple of an isometric translation, so \(\|B\|\le\sum_F|c(F)|\). Since cellular boundaries commute with translations, each added face and its image under \(B\) have the same boundary. Thus, for \(f\in C_2^{(2)}(X_{\cP})\) and \(h\in\ell^2(\Gamma)\), the new boundary operator satisfies
\[
 \widetilde\partial_2(f,h)
 =\partial_2^{\cP}f+\partial_2^{\cP}Bh
 =\partial_2^{\cP}(f+Bh).
\]
Taking \(h=0\) realizes every old boundary, so
\[
 \ran\widetilde\partial_2=\ran\partial_2^{\cP}.
\]

For the generator addition, take \(\widetilde{\cP}=\langle S\cup\{y\}\mid R,yw^{-1}\rangle\), where \(y\) is a new generator and \(w\) is a word in the old generators. The group remains \(\Gamma\). The added edges \(e(g,y):g\to gw\) and added faces \(F(g,yw^{-1})\) are indexed by \(g\in\Gamma\), giving
\[
 C_k^{(2)}(X_{\widetilde{\cP}})
 \cong C_k^{(2)}(X_{\cP})\oplus\ell^2(\Gamma),
 \qquad k=1,2.
\]
The extra summand records the added \(y\)-edges for \(k=1\) and the added faces for \(k=2\). Define the linear operator \(B_w:\ell^2(\Gamma)\to C_1^{(2)}(X_{\cP})\) by sending \(e(g,y)\) to the oriented edge chain of the old path labeled \(w\) from \(g\) to \(gw\). It is bounded by the finite translation argument above.

The boundary of an added face follows its \(y\)-edge and returns along this old path. Hence, for \(f\in C_2^{(2)}(X_{\cP})\) and \(h\in\ell^2(\Gamma)\), the boundary in the old and new edge coordinates is
\[
 \widetilde\partial_2(f,h)=(\partial_2^{\cP}f-B_wh,h).
\]
For \(v\in C_1^{(2)}(X_{\cP})\) and \(h\in\ell^2(\Gamma)\), define the bounded invertible change of edge coordinates
\[
 S(v,h)=(v+B_wh,h).
\]
Adding \(B_wh\) to the old edge coordinate cancels the contribution \(-B_wh\), giving
\[
 \begin{aligned}
 S\widetilde\partial_2(f,h)&=(\partial_2^{\cP}f,h),\\
 S(\ran\widetilde\partial_2)&=\ran\partial_2^{\cP}\oplus\ell^2(\Gamma).
 \end{aligned}
\]
Since \(S\) and \(S^{-1}\) are both bounded linear operators, \(S\) is a homeomorphism and preserves closedness in both directions. Convergence in the Hilbert direct sum is componentwise, so \(\ran\partial_2^{\cP}\oplus\ell^2(\Gamma)\) is closed if and only if \(\ran\partial_2^{\cP}\) is closed. The displayed identity therefore gives the required equivalence. Both additions, and hence their inverses, preserve closedness.
\end{proof}

The numerical gap values may depend on the presentation; see Proposition~\ref{ex:cyclic-gap} and the discussion following it.

\section{The Baumslag--Solitar groups}
\label{sec:bs}

We study the face spectral gap of Baumslag--Solitar groups using test functions on the signed dual graph. Let
\[
  \Gamma=\operatorname{BS}(1,m)
  =\langle a,b\mid aba^{-1}b^{-m}\rangle,
  \qquad m\ge2.
\]
Let \(X_m\) be the Cayley \(2\)-complex of this presentation. Put \(r=aba^{-1}b^{-m}\) and \(F(g)=F(g,r)\); the faces form one \(\Gamma\)-orbit by \eqref{eq:face-parametrization}.

For illustration, take \(m=2\). Put \(H=\langle b\rangle\) and call \(gH\) a \(b\)-line. The identity \(b^2a=ab\) gives
\[
 gb^{2j}a=gab^j,\qquad gb^{2j+1}a=gbab^j.
\]
The \(a\)-edges from \(gb^{2j}\) and \(gb^{2j+1}\) reach the successor lines \(gaH\) and \(gbaH\), respectively; the predecessor is \(ga^{-1}H\). Thus the adjacency graph of the \(b\)-lines is an unrooted binary tree, called the Bass--Serre tree \cite[I.5]{Serre1980}, with one predecessor and two successors at each vertex. In Figure~\ref{fig:bs-binary}, the relation face has one \(b\)-edge on the right and two on the left; the dashed \(a\)-edge joins \(gb\) to \(gba\) on the \(b\)-line \(gbaH\).

\begin{figure}[htbp]
\centering
\begin{tikzpicture}[
  x=1cm,
  y=1cm,
  every node/.style={font=\scriptsize},
  cayley vertex/.style={circle,fill=black,inner sep=1.15pt},
  b edge/.style={draw=blue!65!black,line width=.75pt,
    -{Latex[length=2mm,width=1.25mm]}},
  a edge/.style={draw=red!70!black,line width=.75pt,
    -{Latex[length=2mm,width=1.25mm]}},
  b germ/.style={draw=blue!65!black,line width=.65pt,opacity=.65},
  a germ/.style={draw=red!70!black,line width=.65pt,opacity=.65},
  line vertex/.style={rounded corners=1.5pt,draw=blue!55!black,
    fill=blue!4,minimum width=1.05cm,minimum height=.42cm,inner sep=1pt},
  tree edge/.style={draw=red!65!black,line width=.7pt,
    -{Latex[length=1.8mm,width=1.15mm]}},
  tree germ/.style={draw=red!65!black,line width=.65pt,
    densely dashed,opacity=.65}
]
  % Reflect the geometry across y=x; typeset the labels separately.
  \begin{scope}[cm={0,1,1,0,(0,0)}]
  % One dyadic relation cell and the second outgoing sheet.
  \coordinate (u0) at (0,1.65);
  \coordinate (u1) at (2,1.65);
  \coordinate (v0) at (0,.35);
  \coordinate (v1) at (1,.35);
  \coordinate (v2) at (2,.35);
  \filldraw[fill=blue!9,draw=black!35,line width=.45pt]
    (v0)--(u0)--(u1)--(v2)--(v1)--cycle;

  \draw[b edge] (v0)--(v1);
  \draw[b edge] (v1)--(v2);
  \draw[b edge] (u0)--(u1);
  \draw[a edge] (v0)--(u0);
  \draw[a edge] (v2)--(u1);

  \coordinate (w0) at (1,-.65);
  \coordinate (w1) at (2,-.65);

  % Omitted b-line continuations and a-edge germs.
  \draw[b germ] (v0)--++(-.32,0);
  \draw[b germ] (v2)--++(.32,0);
  \draw[b germ] (u0)--++(-.32,0);
  \draw[b germ] (u1)--++(.32,0);
  \draw[b germ] (w0)--++(-.30,0);
  \draw[b germ] (w1)--++(.30,0);
  \draw[a germ] (v0)--++(0,-.30);
  \draw[a germ] (v1)--++(0,.28);
  \draw[a germ] (v2)--++(0,-.30);
  \draw[a germ] (u0)--++(0,.30);
  \draw[a germ] (u1)--++(0,.30);
  \draw[a germ] (w0)--++(0,-.30);
  \draw[a germ] (w1)--++(0,.30);
  \draw[a germ] (w1)--++(0,-.30);

  \node[cayley vertex] at (v0) {};
  \node[cayley vertex] at (v1) {};
  \node[cayley vertex] at (v2) {};
  \node[cayley vertex] at (u0) {};
  \node[cayley vertex] at (u1) {};

  \draw[a edge,dashed] (v1)--(w0);
  \draw[b edge] (w0)--(w1);
  \node[cayley vertex] at (w0) {};
  \node[cayley vertex] at (w1) {};
  \end{scope}

  % Upright labels for the reflected relation face.
  \node at (1,1.25) {\(F(g,r)\)};
  \node[anchor=east] at (.15,-.18) {\(g\)};
  \node[anchor=east] at (.15,.78) {\(gb\)};
  \node at (.05,2.22) {\(gb^2\)};
  \node[anchor=west] at (1.82,-.22) {\(ga\)};
  \node[anchor=south] at (1.88,2.14) {\(gab=gb^2a\)};
  \node[blue!55!black,below] at (.35,-.40) {\(gH\)};
  \node[blue!55!black,anchor=west] at (1.90,1) {\(gaH\)};
  \node[anchor=east] at (-.92,.87) {\(gba\)};
  \node[anchor=east] at (-.92,2.10) {\(gbab\)};
  \node[blue!55!black,anchor=east] at (-.85,1.50) {\(gbaH\)};
  \node[font=\small] at (1,-1.38) {\textup{(a) a relation face}};

  % The binary tree rotated clockwise.
  \begin{scope}[xshift=5.25cm]
    \node[line vertex] (root) at (.10,.95) {\(gH\)};
    \node[line vertex] (upper) at (1.80,1.75) {\(gaH\)};
    \node[line vertex] (lower) at (1.80,.15) {\(gbaH\)};
    \node[circle,fill=blue!55!black,inner sep=1.15pt] (ll) at (3.35,2.25) {};
    \node[circle,fill=blue!55!black,inner sep=1.15pt] (lr) at (3.35,1.25) {};
    \node[circle,fill=blue!55!black,inner sep=1.15pt] (rl) at (3.35,.65) {};
    \node[circle,fill=blue!55!black,inner sep=1.15pt] (rr) at (3.35,-.35) {};

    \draw[tree edge] (root.east)--(upper.west);
    \draw[tree edge] (root.east)--(lower.west);
    \draw[tree edge] (upper.east)--(ll);
    \draw[tree edge] (upper.east)--(lr);
    \draw[tree edge] (lower.east)--(rl);
    \draw[tree edge] (lower.east)--(rr);
    \foreach \leaf in {ll,lr,rl,rr}{
      \draw[tree germ] (\leaf)--++(.50,.20);
      \draw[tree germ] (\leaf)--++(.50,-.20);
    }
    \draw[tree germ] (root)--++(-.65,0);

    \node[font=\small] at (1.65,-1.38)
      {\textup{(b) binary tree of \(b\)-lines}};
  \end{scope}
\end{tikzpicture}
\caption{A relation face and the binary tree of \(b\)-lines in \(\operatorname{BS}(1,2)\).}
\label{fig:bs-binary}
\end{figure}

A square-summable face chain on \(X_m\) is specified by a function \(f:\Gamma\to\R\), where \(f(g)\) is the coefficient of the face \(F(g)\) based at \(g\in\Gamma\). Thus
\[
 C_2^{(2)}(X_m)\cong\ell^2(\Gamma),\qquad
 \sum_{g\in\Gamma}f(g)F(g)\longleftrightarrow f.
\]
\Needspace{3\baselineskip}
\begin{lemma}
\label{lem:bs-boundary}
All edges of \(X_m^\vee\) have weight \(1\). Each \(e(g,a)\) gives a positive dual edge joining \(F(g)\) and \(F(gb^{-m})\), while each \(e(g,b)\) gives a clique on
\[
 \{F(ga^{-1}),F(g),F(gb^{-1}),\ldots,F(gb^{-(m-1)})\},
\]
with positive edges from \(F(ga^{-1})\) to the other vertices and negative edges between those vertices. The potential is constant, \(V=1-m^2\), and
\begin{equation}
\label{eq:bs-dual}
 \Delta_2=L_\sigma^\vee+(1-m^2)I.
\end{equation}
Moreover, \(\partial_2\) is injective.
\end{lemma}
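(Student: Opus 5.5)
We read off the boundary of a single face and then use \(\Gamma\)-equivariance, \eqref{eq:boundary-equivariance}, to obtain every incidence. Reading \(r=aba^{-1}b^{-m}\) from \(g\), the boundary path visits \(g\), \(ga\), \(gab\), \(gaba^{-1}=gb^m\), \(gb^{m-1},\dots,g\). This gives
\[
 \partial_2F(g)=e(g,a)+e(ga,b)-e(gb^m,a)-\sum_{j=0}^{m-1}e(gb^j,b).
\]
These \(m+3\) edges are pairwise distinct. The two \(a\)-edges start at \(g\) and \(gb^m\), and \(b^m\ne1\). The \(b\)-edges start at \(ga\) and at the \(gb^j\); we have \(gb^j\ne gb^k\) for \(j\ne k\) because \(b\) has infinite order, and \(a\ne b^j\) because \(a\) maps to a generator of the abelianization \(\Z\) while \(b\) maps to \(0\). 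Hence every nonzero incidence is \(\pm1\), so every dual edge has weight \(1\), and \(q\equiv m+3\) by \eqref{eq:q-def}.

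Next we invert these incidences to list the faces meeting a fixed edge. The edge \(e(h,a)\) is traversed forward only by \(F(h)\) and backward only by \(F(g)\) with \(gb^m=h\), that is, by \(F(hb^{-m})\). The opposite signs give one positive dual edge joining \(F(h)\) and \(F(hb^{-m})\). The edge \(e(h,b)\) is traversed forward by \(F(ha^{-1})\) and backward by \(F(hb^{-j})\) for \(0\le j\le m-1\). By the same normal-form facts as above, these \(m+1\) faces are distinct. Definition~\ref{def:dual-graph} then produces the claimed clique: its edges are positive between \(F(ha^{-1})\) and each \(F(hb^{-j})\), and negative among the \(F(hb^{-j})\), since those incidences share the sign \(-1\).

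For the degree we use \eqref{eq:dual-degree}, counting at a fixed face \(F(g)\) edge by edge:
\begin{itemize}
\item each of its two \(a\)-edges contributes one neighbour;
\item its forward edge \(e(ga,b)\) lies in a clique of size \(m+1\), contributing \(m\);
\item each of its \(m\) backward \(b\)-edges also lies in such a clique, contributing \(m\) each.
\end{itemize}
Thus \(d^\vee\equiv 2+m+m^2\), and Theorem~\ref{thm:dual-formula} gives \(V=q-d^\vee=(m+3)-(2+m+m^2)=1-m^2\), which is \eqref{eq:bs-dual}.

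For injectivity, let \(f\in\ell^2(\Gamma)\) satisfy \(\partial_2f=0\). By the incidence list above, the coefficient of \(e(h,a)\) in \(\partial_2f\) is \(f(h)-f(hb^{-m})\), so \(f(h)=f(hb^{-mk})\) for all \(k\in\Z\). The points \(hb^{-mk}\) are pairwise distinct because \(b\) has infinite order, so square-summability forces \(f(h)=0\).

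The only delicate step is the distinctness of the faces and edges involved, since a coincidence would change weights, signs, and the degree count. We handle it with the two facts that \(b\) has infinite order in \(\operatorname{BS}(1,m)\) (for instance via the faithful affine representation \(a\mapsto(x\mapsto mx)\), \(b\mapsto(x\mapsto x+1)\)) and that the abelianization separates \(a\) from powers of \(b\).
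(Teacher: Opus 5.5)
Your proposal is correct and follows essentially the paper's route: read off \(\partial_2F(g)\) and translate it to get the incidence list, count \(q=m+3\) and \(d^\vee=2+m(m+1)\), and obtain injectivity from the \(a\)-edge coefficient \(f(h)-f(hb^{-m})\). Where the paper shows the boundary loop is embedded in the affine model \(\Z\ltimes\Z[1/m]\), you check distinctness of the relevant edges and faces via the infinite order of \(b\) and a map to \(\Z\), which is equally sufficient.

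One small slip: the abelianization of \(\operatorname{BS}(1,m)\) is \(\Z\oplus\Z/(m-1)\), so for \(m\ge3\) the image of \(b\) there is not \(0\). What your argument actually uses is the homomorphism \(\Gamma\to\Z\) with \(a\mapsto1\), \(b\mapsto0\). It is well defined because \(r\) has zero exponent sum in \(a\), and it separates \(a\) from every \(b^j\).
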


\begin{proof}
We use the affine model (cf.~\cite[III.5]{deLaHarpe2000})
\[
  \Gamma=\Z\ltimes\Z[1/m],\qquad
  (x,y)(x',y')=(x+x',y+m^x y'),
\]
where \(\Z[1/m]=\{k/m^n:k\in\Z,\ n\ge0\}\), \(a=(1,0)\), and \(b=(0,1)\). The proper prefixes of \(r\) visit the distinct vertices
\[
 (0,0),(1,0),(1,m),(0,m),\ldots,(0,1).
\]
Thus the lifted face boundary is embedded. Its positively oriented boundary reads \(r=aba^{-1}b^{-m}\): the \(a,b\) steps follow the generator edge orientations and contribute \(+1\), while the \(a^{-1},b^{-1}\) steps run against them and contribute \(-1\). For \(g\in\Gamma\) and \(0\le j<m\), the nonzero incidence numbers are
\begin{align*}
 \eta(e(g,a),F(g))&=1,&
 \eta(e(g,a),F(gb^{-m}))&=-1,\\
 \eta(e(g,b),F(ga^{-1}))&=1,&
 \eta(e(g,b),F(gb^{-j}))&=-1.
\end{align*}
These incidences give the stated signed edges, all of weight \(1\). For \(1\le k<m\), the faces \(F(g)\) and \(F(gb^{-k})\) occur together in the cliques associated with \(e(gb^j,b)\), \(0\le j<m-k\), giving \(m-k\) negative dual edges between them.

For \(m=2\), the three \(b\)-edge cliques containing \(F(g)\) have vertex sets
\[
  \begin{gathered}
  \{F(ga^{-1}),F(g),F(gb^{-1})\},\qquad
  \{F(gba^{-1}),F(gb),F(g)\},\\
  \{F(g),F(ga),F(gab^{-1})\}.
  \end{gathered}
\]
Each triangle has one negative and two positive edges. The \(a\)-edges add positive edges to \(F(gb^{\pm2})\), giving eight distinct neighbors and \(d^\vee(F(g))=8\), as in Figure~\ref{fig:bs-dual-neighborhood}.

\begin{figure}[htbp]
\centering
\begin{tikzpicture}[
  x=1cm,
  y=1cm,
  every node/.style={font=\scriptsize},
  dual vertex/.style={
    rounded corners=1.2pt,draw=blue!60!black,fill=blue!6,
    line width=.65pt,minimum height=.46cm,inner sep=2.2pt
  },
  center vertex/.style={dual vertex,fill=blue!16,line width=.9pt},
  clique/.style={fill=gray!7,draw=none},
  positive edge/.style={draw=blue!65!black,line width=.85pt},
  positive bypass/.style={
    draw=blue!65!black,line width=.85pt,
    preaction={draw=white,line width=2.5pt}
  },
  negative edge/.style={draw=red!70!black,dashed,line width=.85pt},
  clique label/.style={font=\scriptsize,text=black!48}
]
  % Reflect the graph geometry across y=x; keep labels upright.
  \begin{scope}[cm={0,1,1,0,(0,0)}]
  \coordinate (c) at (0,0);
  \coordinate (bm2) at (-3.50,0);
  \coordinate (bm1) at (-1.75,0);
  \coordinate (bp1) at (1.75,0);
  \coordinate (bp2) at (3.50,0);
  \coordinate (aminus) at (0,-1.65);
  \coordinate (baminus) at (1.75,-1.65);
  \coordinate (aplus) at (0,1.65);
  \coordinate (abminus) at (-3.50,1.65);

  % The three complete graphs contributed by b-edges.
  \path[clique] (c)--(bm1)--(aminus)--cycle;
  \path[clique] (c)--(bp1)--(baminus)--cycle;
  \path[clique] (c)--(aplus)--(abminus)--cycle;

  % Positive dual edges.
  \draw[positive edge] (c)--(aminus);
  \draw[positive edge] (aminus)--(bm1);
  \draw[positive edge] (c)--(baminus);
  \draw[positive edge] (baminus)--(bp1);
  \draw[positive edge] (c)--(aplus);
  \draw[positive edge] (c)--(abminus);

  % Negative dual edges.
  \draw[negative edge] (c)--(bm1);
  \draw[negative edge] (c)--(bp1);
  \draw[negative edge] (aplus)--(abminus);
  \end{scope}

  \node[clique label] at (-.64,-.58) {\(K_3\)};
  \node[clique label] at (-.64,1.17) {\(K_3\)};
  \node[clique label] at (1.10,-1.17) {\(K_3\)};

  % Route long positive edges around the upright vertex boxes.
  \draw[positive bypass] (c)
    .. controls (-2.30,-1.00) and (-2.30,-2.50) .. (bm2);
  \draw[positive bypass] (c)
    .. controls (2.30,1.00) and (2.30,2.50) .. (bp2);

  % Dual vertices are drawn last so that all edges end cleanly.
  \node[center vertex] at (c) {\(F(g)\)};
  \node[dual vertex] at (bm2) {\(F(gb^{-2})\)};
  \node[dual vertex] at (bm1) {\(F(gb^{-1})\)};
  \node[dual vertex] at (bp1) {\(F(gb)\)};
  \node[dual vertex] at (bp2) {\(F(gb^2)\)};
  \node[dual vertex] at (aminus) {\(F(ga^{-1})\)};
  \node[dual vertex] at (baminus) {\(F(gba^{-1})\)};
  \node[dual vertex] at (aplus) {\(F(ga)\)};
  \node[dual vertex] at (abminus) {\(F(gab^{-1})\)};

  % Sign and weight legend.
  \draw[positive edge] (-2.15,-4.25)--(-1.55,-4.25);
  \node[anchor=west] at (-1.47,-4.25) {positive};
  \draw[negative edge] (.20,-4.25)--(.80,-4.25);
  \node[anchor=west] at (.88,-4.25) {negative};
\end{tikzpicture}
\caption{The signed dual neighborhood of \(F(g)\) in \(\operatorname{BS}(1,2)\).}
\label{fig:bs-dual-neighborhood}
\end{figure}

Each face has two \(a\)-edges and \(m+1\) \(b\)-edges with incidence numbers \(\pm1\), so each contributes \(1\) to \(q(F)\) as defined in \eqref{eq:q-def}. Each \(a\)-edge of \(F\) is shared with one other face, and each \(b\)-edge with \(m\) other faces. Each such connection gives a dual edge of weight \(1\), counted in \(d^\vee(F)\) as defined in \eqref{eq:dual-degree}; connections to the same face through different edges of \(X\) are counted separately. Thus
\[
 \begin{gathered}
 q(F)=2+(m+1)=m+3,\\
 d^\vee(F)=2\cdot1+(m+1)\cdot m=2+m(m+1),\\
 V(F)=q(F)-d^\vee(F)=1-m^2,
 \end{gathered}
\]
so \eqref{eq:bs-dual} follows from Theorem~\ref{thm:dual-formula}.

Finally, for \(f\in\ell^2(\Gamma)\), the incidence formulas give \(\langle e(g,a),\partial_2f\rangle=f(g)-f(gb^{-m})\). If \(\partial_2f=0\), then \(f(gb^{km})=f(g)\) for every \(k\in\Z\). Since \(b\) has infinite order,
\[
 \|f\|_2^2\ge\sum_{k\in\Z}|f(gb^{km})|^2
 =\sum_{k\in\Z}|f(g)|^2<\infty
 \quad\Longrightarrow\quad f(g)=0.
\]
Thus \(\ker\partial_2=\{0\}\).
\end{proof}

We identify the dual vertex \(F(g)\) with \(g\in\Gamma\). Let \(f\) be a finitely supported test function. The signed energy of the clique arising from \(e(g,b)\) is
\[
 \begin{aligned}
 &\sum_{j=0}^{m-1}|f(ga^{-1})-f(gb^{-j})|^2
   +\sum_{0\le j<k<m}|f(gb^{-j})+f(gb^{-k})|^2\\
 &\quad=(m-1)\left(|f(ga^{-1})|^2+\sum_{j=0}^{m-1}|f(gb^{-j})|^2\right)\\
 &\qquad+\left|f(ga^{-1})-\sum_{j=0}^{m-1}f(gb^{-j})\right|^2.
 \end{aligned}
\]
Adding the contribution of the dual edge corresponding to \(e(g,a)\) and rearranging the sums, we obtain
\begin{equation}
\label{eq:bs-direct-form}
 \begin{aligned}
 \langle L_\sigma^\vee f,f\rangle
 &=(m^2-1)\|f\|_2^2+\sum_g|f(g)-f(gb^{-m})|^2\\
 &\quad+\sum_g\left|f(ga^{-1})-\sum_{j=0}^{m-1}f(gb^{-j})\right|^2.
 \end{aligned}
\end{equation}
For \(g=(x,y)\) in the affine model and \(j\in\Z\), right multiplication gives
\[
 ga^{-1}=(x-1,y),\qquad gb^{-j}=(x,y-jm^x).
\]

\begin{proof}[Proof of Theorem~\ref{thm:bs-gap}]
By Lemma~\ref{lem:bs-boundary}, \(\partial_2\) is injective. The variational formula in Section~\ref{sec:spectral-transfer} therefore reduces the gap claim to constructing nonzero finitely supported functions with arbitrarily small Rayleigh quotients. The potential \(V=1-m^2\) cancels the norm term in \eqref{eq:bs-direct-form}, so we only need to reduce the last two sums.

We now use complex-valued test functions, with all inner products taken in the Hermitian sense. The first of these sums compares \(f(x,y)\) and \(f(x,y-m^{x+1})\) on the same \(x\)-level. We choose the \(m\)-periodic function \(\chi(y)=e^{2\pi i y/m}\), \(y\in\Z[1/m]\), so that
\[
 \chi(y-m^{x+1})=\chi(y),\qquad\text{for }x\ge0.
\]
For the second sum, the shifted values satisfy
\[
 \sum_{j=0}^{m-1}\chi(y-jm^x)
 =\begin{cases}
   0,&x=0,\\
   m\chi(y),&x\ge1.
  \end{cases}
\]
Thus we use the factor \(m^{-x}\) in front of \(\chi(y)\) to cancel the second remainder for \(x\ge0\).

Set \(E_k=\{0,\ldots,k-1\}\) for \(k\ge1\). For integers \(N,M\ge1\), define
\[
 f=f_{N,M},\qquad
 f_{N,M}(x,y)=m^{-x}\chi(y)\mathbf1_{E_N}(x)\mathbf1_{E_M}(y).
\]
Put \(I=\mathbf1_{E_M}\). For \(t\in\mathbb{N}\),
\begin{equation}
\label{eq:bs-interval-shift}
 \begin{aligned}
 \sum_{y<M}|I(y)-I(y-t)|^2
 &=\sum_{y\ge M}|I(y)-I(y-t)|^2\\
 &=\min\{t,M\}\le t.
 \end{aligned}
\end{equation}
Substituting \(f\) into the last two sums of \eqref{eq:bs-direct-form}, and using \eqref{eq:bs-interval-shift} and Cauchy--Schwarz, gives
\[
 \begin{aligned}
 \sum_g|f(g)-f(gb^{-m})|^2 &=\sum_{x=0}^{N-1}m^{-2x}\sum_{y\in\Z\left[1/m\right]}
      \left|\chi(y)\bigl[I(y)-I(y-m^{x+1})\bigr]\right|^2\\
 &=\sum_{x=0}^{N-1}m^{-2x}
      \sum_{y\in\Z}|I(y)-I(y-m^{x+1})|^2\\
 &\le2m\sum_{x=0}^{N-1}m^{-x}
 \le\frac{2m^2}{m-1},
 \end{aligned}
\]
and
\[
 \begin{aligned}
 &\sum_g\left|f(ga^{-1})-\sum_{j=0}^{m-1}f(gb^{-j})\right|^2\\
 &=Mm^{-2(N-1)}
   +\sum_{x=0}^{N-1}m^{-2x}\sum_{y\in\Z[1/m]}
      \left|\sum_{j=0}^{m-1}\chi(y-jm^x)
      \bigl[I(y)-I(y-jm^x)\bigr]\right|^2\\
 &\le Mm^{-2(N-1)}
   +m\sum_{x=0}^{N-1}m^{-2x}\sum_{j=0}^{m-1}
      \sum_{y\in\Z}|I(y)-I(y-jm^x)|^2\\
 &\le Mm^{-2(N-1)}
   +2m\sum_{x=0}^{N-1}m^{-2x}\sum_{j=0}^{m-1}jm^x\\
 &=Mm^{-2(N-1)}
   +m^2(m-1)\sum_{x=0}^{N-1}m^{-x}\\
 &\le Mm^{-2(N-1)}+m^3.
 \end{aligned}
\]
Hence, with \(C_m=m^3+2m^2/(m-1)\), we obtain by \eqref{eq:bs-dual} and \eqref{eq:bs-direct-form}
\begin{equation}
\label{eq:bs-dual-test-energy}
 \langle\Delta_2f,f\rangle\le Mm^{-2(N-1)}+C_m.
\end{equation}
The total squared norm is
\begin{equation}
\label{eq:bs-direct-norm}
 \|f_{N,M}\|_2^2
 =M\sum_{x=0}^{N-1}m^{-2x}\ge M.
\end{equation}
Hence the variational formula gives
\[
 0\le\lambda_2^+(X_m)
 \le\frac{\langle\Delta_2f_{N,M},f_{N,M}\rangle}{\|f_{N,M}\|_2^2}
 \le m^{-2(N-1)}+\frac{C_m}{M}.
\]
Taking \(M=m^{2N}\) bounds the quotient by \((m^2+C_m)m^{-2N}\to0\). Finally, Theorem~\ref{thm:spectral-transfer} gives \(\lambda_{1,\mathrm{coex}}(X_m)=\lambda_2^+(X_m)=0\).
\end{proof}

\begin{remark}
\label{rem:bs-homology}
In fact, Mattox's Theorem~5.14, applied to \(G_{m,1}\cong\operatorname{BS}(1,m)\), gives \(H_1(\Gamma;\ell^2(\Gamma))\ne0\) \cite{Mattox2016}. In our setting, this means that the unreduced homology group \(\ker\partial_1/\ran\partial_2\) of the \(\ell^2\) chain complex is nonzero. However, since \(\Gamma\) is infinite and amenable, Cheeger and Gromov's theorem implies that the reduced homology group \(\ker\partial_1/\overline{\ran\partial_2}\) is zero \cite[Theorem~0.2]{CheegerGromov1986}. Thus
\[
 \ran\partial_2\subsetneq\ker\partial_1
 =\overline{\ran\partial_2}.
\]
The boundary range is not closed, so \eqref{eq:closed-range} gives \(\lambda_2^+(X_m)=0\), which also answers Dubashinskiy's question. Our proof uses finite face chains without using group homology theory, and gives injectivity and explicit decay of Rayleigh quotients in addition.
\end{remark}

Dubashinskiy's \(2\)-complex is obtained by attaching a face to every embedded cycle of length at most \(D\) in the Cayley graph \cite[Section~1]{Dubashinskiy2025}. In fact, this amounts to adding redundant relators. Proposition~\ref{prop:presentation-independence} then gives the following consequence of Theorem~\ref{thm:bs-gap}.

\begin{corollary}
\label{cor:bs-bounded-cycle}
Let \(D\ge m+3\). The bounded cycle coexact Laplacian of \(\operatorname{BS}(1,m)\) over \(\{a,b\}\), formed from the embedded geometric cycles of length at most \(D\), has zero spectral gap.
\end{corollary}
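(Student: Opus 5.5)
The plan is to reduce Corollary~\ref{cor:bs-bounded-cycle} to Theorem~\ref{thm:bs-gap} through Proposition~\ref{prop:presentation-independence}. The key observation is that Dubashinskiy's bounded cycle complex is, up to cellular isomorphism, the Cayley \(2\)-complex of a finite presentation of the form
\[
 \widetilde{\cP}=\langle a,b\mid r,u_1,\ldots,u_k\rangle .
\]
Here \(r=aba^{-1}b^{-m}\), and the \(u_i\) are redundant relators.

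First, I will describe the bounded cycle complex explicitly. Its faces are attached to the embedded cycles of length at most \(D\) in the Cayley graph of \(\operatorname{BS}(1,m)\) over \(\{a,b\}\). The group acts freely on the vertices by left multiplication, and it permutes these cycles. Reading a cycle from a chosen base vertex \(g\) in a chosen direction gives a cyclically reduced word representing the identity. Hence the cycles fall into finitely many \(\Gamma\)-orbits, one for each of finitely many words of length at most \(D\).

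I will choose one word \(u_i\) per orbit. Every word trivial in \(\Gamma\) lies in the normal closure of \(r\), so each \(u_i\) is a redundant relator. The faces of the bounded cycle complex then correspond bijectively to \(\Gamma\times\{u_1,\ldots,u_k\}\), matching \eqref{eq:face-parametrization}.

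One subtlety needs care. A cycle with nontrivial rotational symmetry under a group element \(h\) would be counted once in the cycle complex but \(|{\rm Stab}|\) times in \(\Gamma\times\{u_i\}\). I will check that no such symmetry exists. A deck transformation \(h\ne1\) mapping an embedded finite cycle to itself would act as a rotation of the cycle, so some power of \(h\) would fix a vertex. Since the action is free and the group is torsion-free, this cannot happen.

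The hypothesis \(D\ge m+3\) guarantees that the boundary of \(F(1,r)\) is among the cycles, because \(|r|=m+3\) and Lemma~\ref{lem:bs-boundary} shows this boundary is embedded. Therefore \(r\) can be taken as one of the \(u_i\). The resulting complex is simply connected, since it contains all lifts of the \(r\)-cells. Consequently it is exactly \(X_{\widetilde{\cP}}\), and its coexact Laplacian is the coexact Laplacian of that Cayley complex. Face orientations do not matter, by the remark after Theorem~\ref{thm:dual-formula}.

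To conclude, I will pass from the standard presentation to \(\widetilde{\cP}\) by adding the redundant relators one at a time. By Proposition~\ref{prop:presentation-independence}, and more precisely by the first (relator-addition) step of its proof, \(\ran\widetilde\partial_2=\ran\partial_2\) at each stage. Since \(\ran\partial_2\) is not closed for \(X_m\) by Theorem~\ref{thm:bs-gap} and \eqref{eq:closed-range}, \(\lambda_{1,\mathrm{coex}}(X_{\widetilde{\cP}})=0\) by \eqref{eq:bottom-identity}.

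The main obstacle I expect is the identification in the first step: matching Dubashinskiy's conventions on cycles (orientation, multiplicity, and whether the edge set or weights differ) with the cellular model here. If his operator uses a different normalization, such as weights, I would argue instead that it is a bounded, boundedly invertible perturbation on the face space. Such a perturbation preserves non-closedness of the range.
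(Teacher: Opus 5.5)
Your proposal is correct and follows essentially the paper's route. You identify the bounded cycle complex with the Cayley complex of a presentation obtained by adding finitely many redundant relators (with \(r\) among them, by \(D\ge m+3\)), use the relator-addition step of Proposition~\ref{prop:presentation-independence} to get \(\ran\widetilde\partial_2=\ran\partial_2\), and conclude from \eqref{eq:closed-range} and Theorem~\ref{thm:bs-gap}. Your check that each cycle has trivial stabilizer is a detail the paper leaves implicit; it works because some positive power of \(h\) fixes a vertex, which covers reflections as well as rotations. That check is what makes the indexing of faces by \(\Gamma\times\{u_1,\ldots,u_k\}\) rigorous.
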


\begin{proof}
Let \(Y_D\) denote this complex for \(\Gamma=\operatorname{BS}(1,m)\). Since \(D\ge m+3\), it contains all faces of \(X_m\). Each additional cycle reads a word of length at most \(D\) representing \(1\) in \(\Gamma\), so its relation is already implied by \(aba^{-1}b^{-m}\). For fixed \(D\), there are only finitely many such words. Choose a finite filling in \(X_m\) for each word and translate these fillings to the added cycles. The redundant-relator argument in Proposition~\ref{prop:presentation-independence} then gives \(\ran\partial_2^{Y_D}=\ran\partial_2^{X_m}\). By \eqref{eq:closed-range} and Theorem~\ref{thm:bs-gap}, \(\lambda_2^+(Y_D)=0\).

Since \(Y_D\) is simply connected, its coexact space is the closure of the finitely supported graph cycles, and its coexact \(1\)-Laplacian is exactly Dubashinskiy's operator. Theorem~\ref{thm:spectral-transfer} therefore gives the claimed zero gap.
\end{proof}

\section{Amenable groups with zero face gap}
\label{sec:amenable}

In this section, we prove Theorem~\ref{thm:betti-two-gap} using F\o lner cutoffs and infinite cyclic covers. We first introduce F\o lner cutoffs.

\begin{definition}
\label{def:folner-sequence}
Write \(Fg\mathbin{\triangle}F=(Fg\setminus F)\cup(F\setminus Fg)\) for the symmetric difference. A sequence \((F_n)\) of nonempty finite subsets of \(\Gamma\) is a F\o lner sequence if
\[
 \frac{|F_ng\mathbin{\triangle}F_n|}{|F_n|}
 \longrightarrow0
 \qquad\text{for every fixed }g\in\Gamma.
\]
\end{definition}

Such a sequence exists if and only if \(\Gamma\) is amenable, so the quotient criterion above is equivalent to the vertex Rayleigh quotient criterion using boundaries of finite sets \cite{Folner1955,KestenMean1959}. Set \(u_F=|F|^{-1/2}\mathbf1_F\); the normalized indicators \(u_{F_n}\) are the associated F\o lner cutoffs.

We remark that a common convention in the literature on amenability uses \(|gL_n\mathbin{\triangle}L_n|/|L_n|\to0\) \cite[Section~G.5]{BekkaHarpeValette2008}. Inversion identifies the two F\o lner conditions: for \(F_n=L_n^{-1}\), \(|F_ng\mathbin{\triangle}F_n|/|F_n|=|g^{-1}L_n\mathbin{\triangle}L_n|/|L_n|\to0\).

For \(g\in\Gamma\), define \(R_g:\ell^2(\Gamma)\to\ell^2(\Gamma)\) by \((R_gf)(x)=f(xg)\), \(x\in\Gamma\). For the cutoffs \(u_F\), the equivalence \(xg\in F\iff x\in Fg^{-1}\) gives \(R_gu_F=|F|^{-1/2}\mathbf1_{Fg^{-1}}\). Hence
\[
 \|R_gu_F-u_F\|_2^2
 =\frac{|Fg^{-1}\mathbin{\triangle}F|}{|F|}
 =\frac{|Fg\mathbin{\triangle}F|}{|F|}.
\]
The last equality follows by translating the symmetric difference by \(g\).

Let \(S^1=\{z\in\C:|z|=1\}\). We now multiply F\o lner cutoffs by characters to prove the following consequence of Hulanicki's theorem \cite{Hulanicki1964}.

\Needspace{8\baselineskip}
\begin{lemma}
\label{lem:folner-evaluation}
Let \(\Gamma\) be amenable and let \(T=\sum_{g\in E}M_gR_g\colon\ell^2(\Gamma)^k\to\ell^2(\Gamma)^k\) be a bounded linear operator, where \(E\subset\Gamma\) is finite and \(M_g\in M_k(\R)\). If \(T\) is self-adjoint and nonnegative, then
\[
 T(\chi):=\sum_{g\in E}M_g\chi(g)\ge0
\]
for every homomorphism \(\chi:\Gamma\to S^1\).
\end{lemma}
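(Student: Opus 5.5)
We test \(T\) against vectors obtained by multiplying a Følner cutoff by a character and a fixed vector \(v\in\C^k\), and compute the Rayleigh quotient in the limit. Fix \(\chi\) and \(v\in\C^k\). We pass to the complexification of \(T\), which is still self-adjoint and nonnegative because \(M_g\) is real, so \(\langle T\xi,\xi\rangle\ge0\) for complex \(\xi\). Take a Følner sequence \((F_n)\) from Definition~\ref{def:folner-sequence} and set
\[
 \xi_n(x)=\chi(x)\,u_{F_n}(x)\,v\in\C^k,\qquad x\in\Gamma .
\]
Then \(\|\xi_n\|_2^2=\|v\|^2\), since \(|\chi|=1\) and \(\|u_{F_n}\|_2=1\).

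Next we compute \(R_g\xi_n\) using multiplicativity: \((R_g\xi_n)(x)=\chi(x)\chi(g)u_{F_n}(xg)v\). We split this as
\[
 R_g\xi_n=\chi(g)\,\xi_n+\chi(g)\,\chi\cdot(R_gu_{F_n}-u_{F_n})\,v .
\]
The displayed identity before the lemma gives \(\|R_gu_{F_n}-u_{F_n}\|_2^2=|F_ng\mathbin{\triangle}F_n|/|F_n|\), which tends to \(0\) for each of the finitely many \(g\in E\). Consequently
\[
 T\xi_n=\Bigl(\sum_{g\in E}\chi(g)M_g\Bigr)\xi_n+\varepsilon_n,
\]
where \(\|\varepsilon_n\|_2\le\sum_{g\in E}\|M_g\|\,\|v\|\,\|R_gu_{F_n}-u_{F_n}\|_2\to0\). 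Here \(M_g\) acts pointwise on \(\C^k\)-valued functions, and the pointwise multiplication by \(\chi\) is unitary, so it does not change norms.

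Now take inner products. We have
\[
 \langle T\xi_n,\xi_n\rangle=\sum_x|u_{F_n}(x)|^2\langle T(\chi)v,v\rangle+\langle\varepsilon_n,\xi_n\rangle=\langle T(\chi)v,v\rangle+o(1).
\]
The left side is nonnegative, so \(\langle T(\chi)v,v\rangle\ge0\) for every \(v\in\C^k\). This is precisely the claim \(T(\chi)\ge0\) as a Hermitian form on \(\C^k\).

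For the claim to make sense, \(T(\chi)\) must itself be Hermitian. This follows from the positivity just shown, since a complex matrix whose quadratic form is real everywhere is Hermitian. Alternatively it follows from self-adjointness of \(T\), by comparing \(R_g^*=R_{g^{-1}}\) and the independence of the \(R_g\). The positivity route avoids having to discuss uniqueness of the expansion.

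The main technical point is ensuring the cross terms vanish, which rests entirely on the right-Følner condition matching the right-translation operators \(R_g\). This is why the paper's convention \(F_ng\) is the convenient one. The remaining steps are routine bookkeeping.
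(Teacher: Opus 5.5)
Your proof is correct and follows the paper's argument essentially step for step. You use the same test vectors \(\xi_n=\chi\,u_{F_n}v\), split \(R_g\xi_n\) into \(\chi(g)\xi_n\) plus an error of norm controlled by \(|F_ng\mathbin{\triangle}F_n|/|F_n|\), and pass to the limit in \(\langle T\xi_n,\xi_n\rangle\), exactly as the paper does; your remarks on complexifying \(T\) and on why \(T(\chi)\) is Hermitian only make explicit two points the paper leaves implicit.
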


\begin{proof}
By the F\o lner criterion for amenability, choose nonempty finite \(F_n\) with \(|F_ng\mathbin{\triangle}F_n|/|F_n|\to0\) for \(g\in E\), and set
\[
 \xi_n(x)=|F_n|^{-1/2}\chi(x)\mathbf1_{F_n}(x),
 \qquad \|\xi_n\|_2=1.
\]
For \(g\in E\), the identities \(\chi(xg)=\chi(x)\chi(g)\) and \(\mathbf1_{F_n}(xg)=\mathbf1_{F_ng^{-1}}(x)\) give
\[
 \bigl(R_g\xi_n-\chi(g)\xi_n\bigr)(x)
 =\frac{\chi(g)\chi(x)}{\sqrt{|F_n|}}
   \bigl[\mathbf1_{F_ng^{-1}}(x)-\mathbf1_{F_n}(x)\bigr].
\]
Using \(|\chi(g)\chi(x)|=1\), we obtain
\[
 \begin{aligned}
 \|R_g\xi_n-\chi(g)\xi_n\|_2^2
 &=\frac1{|F_n|}\sum_{x\in\Gamma}
   |\mathbf1_{F_ng^{-1}}(x)-\mathbf1_{F_n}(x)|^2\\
 &=\frac1{|F_n|}\sum_{x\in\Gamma}
   \mathbf1_{F_ng^{-1}\mathbin{\triangle}F_n}(x)\\
 &=\frac{|F_ng^{-1}\mathbin{\triangle}F_n|}{|F_n|}
 =\frac{|F_ng\mathbin{\triangle}F_n|}{|F_n|}
   \longrightarrow0.
 \end{aligned}
\]
For \(v\in\C^k\), put \(f_{n,v}(x)=\xi_n(x)v\). Expanding \(T\) and \(T(\chi)\) gives
\[
 (Tf_{n,v})(x)-\xi_n(x)T(\chi)v
 =\sum_{g\in E}
   [\xi_n(xg)-\chi(g)\xi_n(x)]M_gv.
\]
The triangle inequality gives
\[
 \begin{aligned}
 \|Tf_{n,v}-\xi_nT(\chi)v\|_2
 &\le\sum_{g\in E}
   \bigl\|[R_g\xi_n-\chi(g)\xi_n]M_gv\bigr\|_2\\
 &=\sum_{g\in E}\|M_gv\|\,
   \|R_g\xi_n-\chi(g)\xi_n\|_2
   \longrightarrow0,
 \end{aligned}
\]
since \(E\) is finite.
Recall that \(\xi_n\) vanishes outside \(F_n\) and has \(\|\xi_n\|_2=1\). Hence \(\|f_{n,v}\|_2=\|v\|\), and
\[
 \begin{aligned}
 \langle \xi_nT(\chi)v,f_{n,v}\rangle
 &=\sum_{x\in\Gamma}|\xi_n(x)|^2\langle T(\chi)v,v\rangle\\
 &=\|\xi_n\|_2^2\langle T(\chi)v,v\rangle
 =\langle T(\chi)v,v\rangle.
 \end{aligned}
\]
The Cauchy--Schwarz inequality gives
\[
 \begin{aligned}
 \bigl|\langle Tf_{n,v},f_{n,v}\rangle-\langle T(\chi)v,v\rangle\bigr|
 &=\bigl|\langle Tf_{n,v}-\xi_nT(\chi)v,f_{n,v}\rangle\bigr|\\
 &\le\|Tf_{n,v}-\xi_nT(\chi)v\|_2\,\|v\|
 \longrightarrow0.
 \end{aligned}
\]
Since \(T\ge0\), every term \(\langle Tf_{n,v},f_{n,v}\rangle\) is nonnegative. Therefore
\[
 \langle T(\chi)v,v\rangle
 =\lim_n\langle Tf_{n,v},f_{n,v}\rangle\ge0.
\]
Since \(v\in\C^k\) is arbitrary, \(T(\chi)\ge0\).
\end{proof}

To connect character evaluation with homology, we turn to infinite cyclic covers. Recall from Section~\ref{sec:cayley} that \(K_{\cP}=\Gamma\backslash X\) is the presentation complex, where \(\Gamma\) acts on its universal cover \(X\) by left deck transformations \(g\mapsto hg\).

\begin{definition}
\label{def:infinite-cyclic-cover}
Let \(\varphi:\Gamma\twoheadrightarrow\Z\) be an epimorphism and \(N=\ker\varphi\). Using the same left action of \(\Gamma\) on \(X\), form the projections
\[
 X\xrightarrow{q_N}X_\varphi:=N\backslash X
 \xrightarrow{p_\varphi}K_{\cP}=\Gamma\backslash X.
\]
The covering \(p_\varphi\) is called the \emph{infinite cyclic cover associated with \(\varphi\)} \cite[Section~2]{Milnor1968}. The map \(\varphi\) labels its vertices by \(v_n\), \(n\in\Z\); the deck group \(\Gamma/N\cong\Z\) is generated by \(t:v_n\mapsto v_{n+1}\).
\end{definition}
For \(\cP=\langle a,b\mid aba^{-1}b^{-1}\rangle\), take \(\varphi(a)=1\) and \(\varphi(b)=0\), so \(N=\langle b\rangle\). Figure~\ref{fig:cyclic-cover} shows the resulting cover: quotienting the square grid by \(N\) turns each vertical \(b\)-line into a loop, while the \(a\)-edges form an integer line. On vertices, the projections are \(a^ib^j\mapsto v_i\mapsto v\). The relator face of \(K_{\cP}\) lifts under \(p_\varphi\) to a face at each level of \(X_\varphi\). These are exactly the images under \(q_N\) of the square faces of \(X\), with the same boundary word \(aba^{-1}b^{-1}\).

\begin{figure}[htbp]
\centering
\begin{tikzpicture}[
  >=Latex,
  every node/.style={font=\small},
  lifted face/.style={fill=blue!8,draw=none},
  cycle edge/.style={
    draw=blue!55!black,line width=.9pt,
    -{Latex[length=1.7mm,width=1.1mm]},
    shorten <=1.7pt,shorten >=1.7pt},
  continuation/.style={draw=black!48,line width=.55pt,densely dashed},
  map arrow/.style={thick,-{Latex[length=2mm]}},
  marked vertex/.style={circle,fill=black,inner sep=1.25pt}
]
  % The universal cover: a square grid, with b-lines vertical.
  \foreach \x in {-2.4,-.8,.8}
    \foreach \y in {5.4,6.3}
      \path[lifted face] (\x,\y) rectangle ++(1.6,.9);
  \foreach \y in {5.4,6.3,7.2}{
    \foreach \x in {-2.4,-.8,.8}
      \draw[cycle edge] (\x,\y)--++(1.6,0);
    \draw[continuation] (-3.1,\y)--(-2.4,\y);
    \draw[continuation] (2.4,\y)--(3.1,\y);
  }
  \foreach \x in {-2.4,-.8,.8,2.4}{
    \foreach \y in {5.4,6.3}
      \draw[cycle edge] (\x,\y)--++(0,.9);
    \draw[continuation] (\x,5.0)--(\x,5.4);
    \draw[continuation] (\x,7.2)--(\x,7.6);
    \foreach \y in {5.4,6.3,7.2}
      \node[marked vertex] at (\x,\y) {};
  }
  \foreach \x/\lab in {-2.4/a^{-1},-.8/1,.8/a,2.4/a^2}
    \node[below left=2pt,font=\scriptsize] at (\x,6.3) {\(\lab\)};
  \node[below left=2pt,font=\scriptsize] at (-.8,5.4) {\(b^{-1}\)};
  \node[above left=2pt,font=\scriptsize] at (-.8,7.2) {\(b\)};
  \node[above=2pt,font=\scriptsize] at (0,6.3) {\(a\)};
  \node[right=2pt,font=\scriptsize] at (-.8,6.75) {\(b\)};
  \node[anchor=west] at (3.7,6.3) {\(X\)};

  \draw[map arrow] (0,4.92)--node[right=3pt] {\(q_N\)} (0,4.17);

  % The cyclic cover: schematic relator shading and bold a-edges.
  \foreach \x in {-2.4,-.8,.8}
    \draw[cycle edge,line width=1.8pt] (\x,3)--node[below=2pt,font=\scriptsize]
      {\(a\)} ++(1.6,0);
  \draw[continuation] (-3.1,3)--(-2.4,3);
  \draw[continuation] (2.4,3)--(3.1,3);
  \node[text=black!55] at (-3.35,3) {\(\cdots\)};
  \node[text=black!55] at (3.35,3) {\(\cdots\)};
  \foreach \x/\i in {-2.4/-1,-.8/0,.8/1,2.4/2}{
    \path[lifted face] (\x,3)
      .. controls (\x-.64,3.55) and (\x-.48,3.95) .. (\x,3.95)
      .. controls (\x+.48,3.95) and (\x+.64,3.55) .. (\x,3)
      -- cycle;
    \draw[cycle edge] (\x,3)
      .. controls (\x-.64,3.55) and (\x-.48,3.95) .. (\x,3.95)
      .. controls (\x+.48,3.95) and (\x+.64,3.55) .. (\x,3);
    \node[font=\scriptsize] at (\x,3.58) {\(b\)};
    \node[marked vertex] at (\x,3) {};
    \node[below=5pt,font=\scriptsize] at (\x,3) {\(v_{\i}\)};
  }
  \node[anchor=west] at (3.7,3.35) {\(X_\varphi=N\backslash X\)};

  \draw[map arrow] (0,2.25)--node[right=3pt] {\(p_\varphi\)} (0,1.33);

  % The presentation complex: generator loops with schematic relator shading.
  \foreach \sgn/\lab in {-1/b,1/a}{
    \path[lifted face] (0,0)
      .. controls (\sgn*.70,-.15) and (\sgn*1.20,.15) .. (\sgn*1.15,.60)
      .. controls (\sgn*1.10,1.0) and (\sgn*.45,1.0) .. (0,0)
      -- cycle;
    \draw[cycle edge] (0,0)
      .. controls (\sgn*.70,-.15) and (\sgn*1.20,.15) .. (\sgn*1.15,.60)
      .. controls (\sgn*1.10,1.0) and (\sgn*.45,1.0) .. (0,0);
    \node[font=\scriptsize] at (\sgn*.67,.50) {\(\lab\)};
  }
  \node[marked vertex] at (0,0) {};
  \node[below=2pt,font=\scriptsize] at (0,0) {\(v\)};
  \node[anchor=west] at (3.7,.4) {\(K_{\cP}\)};
  \node[font=\scriptsize] at (0,-.60) {\(r=aba^{-1}b^{-1}\)};
\end{tikzpicture}
\caption{The infinite cyclic cover of the torus; shading and thick edges schematically indicate the commutator faces.}
\label{fig:cyclic-cover}
\end{figure}

\Needspace{3\baselineskip}
The following result of Button turns infinite-dimensional \(H_1(N;\R)\) into a free quotient of a finite-index subgroup.

\begin{lemma}[{\cite[Corollary~2.2]{Button2008}}]
\label{lem:button}
Let \(\Gamma\) be a finitely presented group and let \(N\) be the kernel of an epimorphism \(\Gamma\twoheadrightarrow\Z\). If \(H_1(N;\R)\) is infinite-dimensional, then some finite-index subgroup of \(\Gamma\) maps onto the free group \(F_2\).
\end{lemma}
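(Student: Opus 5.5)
This statement is quoted from Button \cite[Corollary~2.2]{Button2008}, and the paper uses it as a black box. If I had to reprove it, I would work with the \(\Q[t^{\pm1}]\)-module structure of \(H_1(N;\Q)\), where \(t\) acts through a lift of a generator of \(\Gamma/N\cong\Z\). Then I would pass to an HNN splitting from which a free quotient can be read off.

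\emph{Step one: reduce to a splitting.} Since \(\Gamma\) is finitely presented and \(\varphi\) is onto \(\Z\), the Bieri--Strebel theorem writes \(\Gamma\) as an HNN extension \(\langle B,t\mid tat^{-1}=\theta(a),\ a\in A\rangle\), with \(B\le N\) finitely generated, \(A\le B\) finitely generated, and \(\varphi(t)=1\). The kernel \(N\) is then the fundamental group of an infinite line of groups: it has vertex groups \(t^iBt^{-i}\) and edge groups \(t^iAt^{-i}\).

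\emph{Step two: translate infinite dimension into non-ascending at the level of homology.} A Mayer--Vietoris argument on the line of groups presents \(H_1(N;\Q)\) as a cokernel of \(\Q[t^{\pm1}]\otimes H_1(A;\Q)\to\Q[t^{\pm1}]\otimes H_1(B;\Q)\), plus a possible contribution from \(H_0\). This module is finitely generated over the PID \(\Q[t^{\pm1}]\). It is therefore infinite-dimensional over \(\Q\) exactly when it has a free summand, which I would show is equivalent to a rank condition: the map \(a\mapsto(\iota_*a,\ \theta_*a)\) from \(H_1(A;\Q)\) into \(H_1(B;\Q)\) has image of rank smaller than \(\dim H_1(B;\Q)\) in an appropriate twisted sense. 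Equivalently, the \(n\)-fold cyclic covers \(\Gamma_n=\varphi^{-1}(n\Z)\) satisfy \(b_1(\Gamma_n)\ge cn\) for some \(c>0\).

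\emph{Step three: produce a free quotient.} Using linear growth of \(b_1(\Gamma_n)\), I would choose \(n\) large and consider \(\Gamma_n\). It splits as an HNN extension over the finite cycle graph of groups of length \(n\). Abelianizing the vertex groups rationally gives a homomorphism of \(\Gamma_n\) onto the fundamental group of a graph of torsion-free abelian groups, or after a further finite-index step, of finite groups. The aim is a quotient in which some edge map fails to be surjective on both sides, so that the Bass--Serre tree is not a line. The fundamental group of a finite graph of finite groups is virtually free. I would then show it is not virtually cyclic, using the extra \(H_1\)-rank, so that it virtually surjects onto \(F_2\); pulling back gives the finite-index subgroup of \(\Gamma\).

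\emph{Main obstacle.} I expect the main obstacle to be step three: passing from rational homological information, an infinite rank module, to an honest finite graph of finite groups that is non-ascending. This needs finite quotients of the abelianized vertex groups in which the images of the edge groups stay proper on both sides. I would first try reducing modulo a large prime \(p\), replacing \(H_1(B;\Q)\) by \(H_1(B;\mathbb F_p)\), where the rank deficit persists for almost all \(p\). Failing that, I would simply cite Button's argument as the paper does.
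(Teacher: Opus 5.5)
Citing Button is exactly what the paper does. The paper adds only one remark: Button's hypothesis is over \(\mathbb{Q}\), and \(H_1(N;\R)\cong H_1(N;\mathbb{Q})\otimes_{\mathbb{Q}}\R\) makes the two conditions equivalent. You should record that too, since your sketch works over \(\mathbb{Q}\) while the lemma is stated over \(\R\).

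As a self-contained argument, however, your sketch has a genuine gap, in Step three as you suspected. Replacing the vertex groups \(t^iBt^{-i}\) of the \(n\)-cycle splitting of \(\Gamma_n\) by their torsion-free abelianizations does not produce a quotient graph of groups. The edge group \(A\) maps into the two adjacent vertex groups by \(\iota\) and \(\theta\). The kernels of the induced maps \(A\to H_1(B;\Z)/\mathrm{torsion}\) along \(\iota\) and along \(\theta\) need not agree; for example, \(\theta(a)\) may be a product of commutators in \(B\) while \(\iota(a)\) is not. In the corresponding quotient of \(\Gamma_n\), \(\iota(a)\) and \(\theta(a)\) are conjugate by a stable letter, so killing one kills the other. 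These forced identifications propagate around the cycle, and nothing in your argument controls what survives. Vertex groups may acquire torsion or shrink, and edge maps may become surjective, which is exactly the degeneration you must exclude. Your Step two rank deficit (precisely: the \(\mathbb{Q}(t)\)-rank of \(\iota_*-t\theta_*\) is less than \(\dim_{\mathbb{Q}}H_1(B;\mathbb{Q})\)) concerns a cokernel over \(\mathbb{Q}[t^{\pm1}]\). It does not give a compatible system of finite quotients of the vertex groups. Reducing mod \(p\) preserves the deficit but not this compatibility. So the passage from linear growth of \(b_1(\Gamma_n)\) to a virtual surjection onto \(F_2\) is still the whole content of the lemma.

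A standard way to finish from your Step two is Lackenby's largeness criterion. It says a finitely presented group is large if it has a nested chain of finite-index subgroups, each normal of \(p\)-power index in its predecessor, along which \(d_p=\dim_{\mathbb{F}_p}H_1(\,\cdot\,;\mathbb{F}_p)\) grows linearly in the index. Take \(\Gamma_{p^k}=\varphi^{-1}(p^k\Z)\). If \(H_1(N;\mathbb{Q})\) has a free \(\mathbb{Q}[t^{\pm1}]\)-summand, the exact sequence
\[
 H_1(N;\mathbb{Q})\xrightarrow{\;t^{p^k}-1\;}H_1(N;\mathbb{Q})
 \longrightarrow H_1(\Gamma_{p^k};\mathbb{Q})\longrightarrow\mathbb{Q}\longrightarrow0
\]
gives \(d_p(\Gamma_{p^k})\ge b_1(\Gamma_{p^k})\ge p^k+1=[\Gamma:\Gamma_{p^k}]+1\). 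This route needs neither the Bieri--Strebel splitting nor the Mayer--Vietoris computation. It uses only that \(H_1(N;\mathbb{Q})\) is finitely generated over \(\mathbb{Q}[t^{\pm1}]\), which holds because \(\Gamma\) is finitely generated. Finite presentability enters only through Lackenby's theorem.
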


Button uses rational coefficients; the real-coefficient version is equivalent since \(H_1(N;\R)\cong H_1(N;\mathbb Q)\otimes_{\mathbb Q}\R\).

\begin{proof}[Proof of Theorem~\ref{thm:betti-two-gap}]
Suppose, for a contradiction, that \(\lambda_2^+(X)>0\), and fix \(0<\varepsilon<\lambda_2^+(X)\).

\medskip\noindent
\emph{Step 1. Write the boundary maps on the cyclic cover.}
Write \(\cP=\langle x_1,\ldots,x_s\mid r_1,\ldots,r_q\rangle\) and \(b=b_1(\Gamma)\ge2\). Choose an isomorphism \(\Gamma_{\mathrm{ab}}=\Gamma/[\Gamma,\Gamma]\cong\Z^b\oplus A\), with \(A\) finite, and let \(\varphi\) be the composite
\[
 \Gamma\twoheadrightarrow\Gamma_{\mathrm{ab}}
 \cong\Z^b\oplus A
 \xrightarrow{\operatorname{pr}_1}\Z,
\]
where \(\operatorname{pr}_1\) is the first coordinate projection. Thus \(\varphi\) is an epimorphism; set \(a_i:=\varphi(x_i)\in\Z\).

For the cover and notation of Definition~\ref{def:infinite-cyclic-cover}, let \(C_k(X_\varphi;\R)\) denote the space of finitely supported real cellular \(k\)-chains. A vertex chain \(h\in C_0(X_\varphi;\R)\) is identified with a finitely supported function \(h:V(X_\varphi)\to\R\). These chains are represented by Laurent polynomials, i.e. finite sums of integer powers of a formal variable \(t\), with multiplication by \(t\) corresponding to deck translation:
\[
 h\longmapsto h(t):=\sum_{n\in\Z}h(v_n)t^n\in\R[t,t^{-1}].
\]
An edge chain in \(C_1(X_\varphi;\R)\) is represented by the column vector \(u(t)=(u_i(t))_{i=1}^s\in\R[t,t^{-1}]^s\), where \(u_i(t)\) is the Laurent polynomial whose coefficient of \(t^n\) is the coefficient of the lifted \(x_i\)-edge based at \(v_n\). Similarly, a face chain in \(C_2(X_\varphi;\R)\) is represented by \(f(t)=(f_j(t))_{j=1}^q\in\R[t,t^{-1}]^q\), where \(f_j(t)\) records the coefficients of the lifted \(r_j\)-faces in the same way. Thus we obtain the chain complex
\[
 \R[t,t^{-1}]^q\xrightarrow{\partial_2(t)}
 \R[t,t^{-1}]^s\xrightarrow{\partial_1(t)}\R[t,t^{-1}],
 \qquad \partial_1(t)\partial_2(t)=0.
\]
The lifted \(x_i\)-edge at \(v_n\) has boundary \(v_{n+a_i}-v_n\). Thus, with \(t^n\) in the \(i\)-th component,
\[
 \begin{aligned}
 \partial_1(t)(0,\ldots,t^n,\ldots,0)^{\top}
 &=t^{n+a_i}-t^n=(t^{a_i}-1)t^n,\\
 \partial_1(t)u(t)&=\sum_{i=1}^s(t^{a_i}-1)u_i(t),\\
 \partial_1(t)&=(t^{a_1}-1,\ldots,t^{a_s}-1).
 \end{aligned}
\]
For the lifted \(r_j\)-face \(F_{n,j}\) based at \(v_n\) and the lifted \(x_i\)-edge \(e_{m,i}\) based at \(v_m\), write \(\eta(e_{m,i},F_{n,j})\) for their signed net incidence in \(X_\varphi\). Thus, with \(t^n\) in the \(j\)-th component,
\[
 \begin{aligned}
 \bigl(\partial_2(t)(0,\ldots,t^n,\ldots,0)^{\top}\bigr)_i
 &=\sum_{m\in\Z}\eta(e_{n+m,i},F_{n,j})\,t^{n+m}\\
 &=t^n\sum_{m\in\Z}\eta(e_{m,i},F_{0,j})\,t^m,\\
 \partial_2(t)_{ij}
 &=\sum_{g\in\Gamma}\eta(e(g,x_i),F(1,r_j))\,t^{\varphi(g)}.
 \end{aligned}
\]
Here the sums are finite, and the second equality holds as deck translation preserves incidence. We use the fact that
\[
 \eta(e_{m,i},F_{0,j})
 =\sum_{\substack{g\in\Gamma\\\varphi(g)=m}}
 \eta(e(g,x_i),F(1,r_j)),\qquad m\in\Z.
\]
Multiplying by \(t^m\) and summing over \(m\) gives the formula for \(\partial_2(t)_{ij}\) above.

Define \(\partial_k(1)\) by setting \(t=1\) in each matrix entry of \(\partial_k(t)\), which means that all deck translates of a given cell in \(X_\varphi\) are identified; hence
\[
 \partial_k(1)=\partial_k^{K_{\cP}},\qquad k=1,2.
\]
Since \(K_{\cP}\) has one vertex, \(\partial_1(1)=0\) and \(\ker\partial_1(1)=C_1(K_{\cP};\R)\cong\R^s\). Moreover, \(\pi_1(K_{\cP})=\Gamma\) gives \(\dim_\R H_1(K_{\cP};\R)=\dim_\R H_1(\Gamma;\R)=b\). We obtain
\[
 \begin{aligned}
 H_1(K_{\cP};\R)&\cong
 \ker\partial_1(1)/\ran\partial_2(1)
 \cong\R^s/\ran\partial_2(1),\\
 \operatorname{rank}_\R\partial_2(1)
 &=s-\dim_\R H_1(K_{\cP};\R)=s-b.
 \end{aligned}
\]

\medskip\noindent
\emph{Step 2. Use a positive face gap to fix the matrix rank.}
For the face Laplacian \(\Delta_2=\partial_2^*\partial_2\), the spectral theorem \cite[Theorem~IX.2.3]{Conway1990} gives
\[
 \spec(\Delta_2)\subseteq\{0\}\cup[\varepsilon,\infty)
 \quad\Longrightarrow\quad T:=\Delta_2^2-\varepsilon\Delta_2\ge0.
\]
Here \(\partial_2:\ell^2(\Gamma)^q\to\ell^2(\Gamma)^s\) is the bounded extension of the cellular boundary on \(X\). Set \(A_g=(\eta(e(g,x_i),F(1,r_j)))_{i,j}\in\R^{s\times q}\). These matrices vanish for all but finitely many \(g\). For a face chain \(f=(f_j)\), the coefficient of \(F(k,r_j)\) is \(f_j(k)\), and the coefficient of \(e(h,x_i)\) in its boundary is
\[
 \begin{aligned}
 (\partial_2f)_i(h)
 &=\sum_{j=1}^q\sum_{k\in\Gamma}
   \eta(e(h,x_i),F(k,r_j))\,f_j(k)\\
 &=\sum_{j=1}^q\sum_{g\in\Gamma}
   (A_g)_{ij}\,f_j(hg^{-1}).
 \end{aligned}
\]
The second equality follows from the substitution \(k=hg^{-1}\) and translation invariance. Since \(f_j(hg^{-1})=(R_{g^{-1}}f_j)(h)\), we obtain
\[
 \begin{aligned}
 \partial_2&=\sum_{g\in\Gamma}A_gR_{g^{-1}},\\
 \Delta_2=\partial_2^*\partial_2
 &=\sum_{g,h\in\Gamma}A_g^{\top}A_hR_{gh^{-1}}.
 \end{aligned}
\]
The second identity follows by expanding \(\partial_2^*\partial_2\) with \(R_{g^{-1}}^*=R_g\) and \(R_gR_{h^{-1}}=R_{gh^{-1}}\).
For \(z\in S^1\), set \(\chi_z(g)=z^{-\varphi(g)}\). Replacing \(R_{g^{-1}}\) with \(\chi_z(g^{-1})\) in the expression for \(\partial_2\) gives
\[
 \sum_{g\in\Gamma}A_g\chi_z(g^{-1})
 =\sum_{g\in\Gamma}A_gz^{\varphi(g)}=\partial_2(z).
\]
Here \(\partial_2(z)\) is the Laurent polynomial matrix \(\partial_2(t)\) evaluated at \(t=z\).
In the formula for \(\Delta_2\), we replace \(R_{gh^{-1}}\) by \(\chi_z(gh^{-1})=\overline{\chi_z(g^{-1})}\chi_z(h^{-1})\). The preceding formula for \(\partial_2(z)\) then gives
\[
 \sum_{g,h\in\Gamma}A_g^{\top}A_h
   \overline{\chi_z(g^{-1})}\chi_z(h^{-1})
 =\partial_2(z)^*\partial_2(z)=:B(z).
\]
The \(q\times q\) matrix \(B(z)\) is Hermitian and nonnegative.
The nonnegative operator \(T=\Delta_2^2-\varepsilon\Delta_2\) is a finite sum of translations with real \(q\times q\) matrix coefficients. This structure essentially comes from the \(\Gamma\)-periodicity and finite range of \(X^\vee\) established in Theorem~\ref{thm:dual-formula}. Since \(\Gamma\) is amenable, applying Lemma~\ref{lem:folner-evaluation} to \(T\) with \(\chi=\chi_z\) gives
\[
 B(z)^2-\varepsilon B(z)\ge0,\qquad z\in S^1.
\]
The ordered eigenvalues of the continuous Hermitian matrix \(B(z)\) vary continuously and lie in \(\{0\}\cup[\varepsilon,\infty)\). Since \(S^1\) is connected, the number of positive eigenvalues is constant. Hence
\[
 \operatorname{rank}_\C\partial_2(z)
 =\operatorname{rank}_\C B(z)
 =\operatorname{rank}_\R\partial_2(1)=s-b,\qquad z\in S^1.
\]

\medskip\noindent
\emph{Step 3. Use homology to contradict amenability.}
Every \((s-b+1)\)-minor of \(\partial_2(t)\) is a Laurent polynomial vanishing on \(S^1\) by Step 2; hence this polynomial is identically zero. We use \(\R(t)\), the field of real rational functions \(P(t)/Q(t)\), where \(P,Q\) are real polynomials, \(Q\ne0\), and \(t\) is a formal variable. The boundary matrices then act between finite-dimensional vector spaces, so their ranks determine the homology dimension. Thus,
\[
 s-b=\operatorname{rank}_\R\partial_2(1)
 \le\operatorname{rank}_{\R(t)}\partial_2(t)\le s-b.
\]
The left inequality follows because any minor nonzero at \(t=1\) remains nonzero over \(\R(t)\); the right inequality follows from the vanishing of all \((s-b+1)\)-minors.

Surjectivity of \(\varphi\) gives some \(a_i=\varphi(x_i)\ne0\), since the \(x_i\) generate \(\Gamma\). Thus \(t^{a_i}-1\) is a nonzero entry of the row matrix \(\partial_1(t)\) over \(\R(t)\), so \(\operatorname{rank}_{\R(t)}\partial_1(t)=1\). The chain condition \(\partial_1(t)\partial_2(t)=0\) therefore gives
\[
 \dim_{\R(t)}\bigl(\ker_{\R(t)}\partial_1(t)/\ran_{\R(t)}\partial_2(t)\bigr)
 =(s-1)-(s-b)=b-1>0.
\]

Choose a nonzero class \(\beta=[u]\) in this quotient, with representative \(u=(u_i)_{i=1}^s\). Write \(u_i=p_i(t)/q_i(t)\), where \(p_i,q_i\in\R[t]\) and \(q_i\ne0\), and set
\[
 D(t):=\prod_{i=1}^s q_i(t)\ne0,\qquad c:=D(t)u.
\]
Then
\[
 \begin{aligned}
 c_i&=p_i(t)\prod_{\substack{1\le j\le s\\j\ne i}}q_j(t)\in\R[t],
       \qquad 1\le i\le s,\\
 \partial_1(t)c&=D(t)\partial_1(t)u=0,\qquad [c]=D(t)\beta\ne0.
 \end{aligned}
\]
Here \([c]\) is the class in the same quotient; it is nonzero because \(D(t)\) is invertible in \(\R(t)\). Thus \(c\in\R[t,t^{-1}]^s\) is a finitely supported edge cycle with \(c\notin\ran_{\R(t)}\partial_2(t)\).

Multiplication by \(t^k\) gives the deck translate of \(c\), which is again a finitely supported edge cycle. We show that the classes \([t^kc]\), \(k\in\Z\), are linearly independent in \(H_1(X_\varphi;\R)\). Suppose that for some finite set \(K\subset\Z\) and real coefficients \(\alpha_k\), not all zero,
\[
 \sum_{k\in K}\alpha_k[t^kc]=0.
\]

Set \(p(t):=\sum_{k\in K}\alpha_kt^k\in\R[t,t^{-1}]\). Since the exponents are distinct and the coefficients are not all zero, \(p(t)\ne0\). The relation above says that the cycle \(p(t)c=\sum_{k\in K}\alpha_kt^kc\) represents the zero homology class. By the definition of cellular homology, this means that \(p(t)c\) is the boundary of a finitely supported face chain. Encoding this chain by a Laurent polynomial vector as in Step 1 gives \(f(t)\in\R[t,t^{-1}]^q\) with
\[
 p(t)c=\partial_2(t)f(t)
 \quad\Longrightarrow\quad
 c=\partial_2(t)\bigl(f(t)/p(t)\bigr).
\]
Since \(f(t)/p(t)\in\R(t)^q\), this contradicts \(c\notin\ran_{\R(t)}\partial_2(t)\). Hence the classes \([t^kc]\), \(k\in\Z\), are linearly independent over \(\R\). Since \(\pi_1(X_\varphi)=N\),
\[
 \dim_\R H_1(X_\varphi;\R)
 =\dim_\R H_1(N;\R)=\infty.
\]

Lemma~\ref{lem:button} now gives a finite-index subgroup \(\Gamma_0\le\Gamma\) with \(\Gamma_0\twoheadrightarrow F_2\). Since amenability passes to subgroups and quotients, this contradicts the nonamenability of \(F_2\). Thus the face gap vanishes, and Theorem~\ref{thm:spectral-transfer} gives
\[
 \lambda_2^+(X)=\lambda_{1,\mathrm{coex}}(X)=0.\qedhere
\]
\end{proof}

\begin{corollary}
\label{cor:abelian-gaps}
For \(d\ge2\), the Cayley \(2\)-complex \(X\) of any finite presentation of \(\Z^d\) satisfies \(\lambda_0(X)=\lambda_2^+(X)=0\).
\end{corollary}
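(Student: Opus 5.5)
The corollary splits into two independent vanishing statements, and I would treat them separately.

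\emph{Vertex gap.} The equality \(\lambda_0(X)=0\) is Kesten's theorem, recalled in the introduction. The vertex spectral bottom \(\inf\spec(\Delta_0)\) vanishes exactly when the group is amenable, and \(\Z^d\) is abelian, hence amenable. To keep this self-contained, I would use the normalized F\o lner cutoffs \(u_{F_n}\) with \(F_n=\{0,\ldots,n-1\}^d\). They satisfy
\[
 \langle\Delta_0u_{F_n},u_{F_n}\rangle=\|\partial_1^*u_{F_n}\|_2^2=\sum_{s\in S}\|R_su_{F_n}-u_{F_n}\|_2^2,
\]
and the right-hand side tends to zero because \(|F_ns\mathbin{\triangle}F_n|/|F_n|\to0\) for each generator \(s\). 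Since this holds for every generating set, it covers every finite presentation.

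\emph{Face gap.} The equality \(\lambda_2^+(X)=0\) should follow directly from Theorem~\ref{thm:betti-two-gap}, so the work is just checking its hypotheses for \(\Gamma=\Z^d\):
\begin{itemize}
\item \(\Z^d\) is infinite and amenable;
\item it is finitely presented, for instance by \(\langle x_1,\ldots,x_d\mid [x_i,x_j],\,i<j\rangle\);
\item \(\Gamma_{\mathrm{ab}}=\Z^d\), so \(b_1(\Gamma)=\dim_\R(\Z^d\otimes_\Z\R)=d\ge2\).
\end{itemize}
Theorem~\ref{thm:betti-two-gap} applies to every finite presentation of \(\Gamma\), so \(\lambda_2^+(X)=\lambda_{1,\mathrm{coex}}(X)=0\) for any Cayley \(2\)-complex of \(\Z^d\). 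Alternatively, one could prove it for the standard presentation alone and transfer the vanishing to other presentations with Proposition~\ref{prop:presentation-independence}.

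No step here is a genuine obstacle, since the corollary is a direct specialization of earlier results. The one point needing care is that the statement covers \emph{any} finite presentation, including ones with redundant relators or extra generators. The face gap part is already stated for every presentation. For the vertex part, the only thing to confirm is that the F\o lner argument above never uses the particular generating set \(S\), which it does not.
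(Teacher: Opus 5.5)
Your proposal is correct and matches the paper's proof. The face gap vanishes because \(\Z^d\) is infinite, amenable and finitely presented with \(b_1(\Z^d)=d\ge2\), so Theorem~\ref{thm:betti-two-gap} applies to every finite presentation, and the vertex gap vanishes by Kesten's theorem, which your explicit F\o lner computation \(\|\partial_1^*u_{F_n}\|_2^2=\sum_{s\in S}\|R_su_{F_n}-u_{F_n}\|_2^2\to0\) makes self-contained for an arbitrary generating set. The only cosmetic difference is that the paper computes \(b_1\) from the vanishing cellular boundaries of the commutator relators in the presentation complex, rather than from the abelianization.
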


\begin{proof}
The standard commutator presentation is
\[
 \Z^d=\langle x_1,\ldots,x_d\mid
 x_ix_jx_i^{-1}x_j^{-1},\ 1\le i<j\le d\rangle.
\]
In the presentation complex, each relator traverses the \(x_i\)- and \(x_j\)-edges once in each direction, so its cellular boundary is zero. Thus the face boundaries impose no linear relations among the \(d\) generator classes, giving \(H_1(\Z^d;\R)\cong\R^d\) and \(b_1(\Z^d)=d\ge2\). Since \(\Z^d\) is infinite and amenable, Theorem~\ref{thm:betti-two-gap} gives zero face gap, and Kesten's theorem gives zero vertex gap.
\end{proof}

The assumption \(b_1\ge2\) is used in Step~3 of the proof of Theorem~\ref{thm:betti-two-gap} to ensure \(b_1-1>0\). For amenable groups with \(b_1=1\), the face gap can be zero, as for \(\operatorname{BS}(1,m)\), \(m\ge2\), in Theorem~\ref{thm:bs-gap}, or positive, as for the presentation of the virtually cyclic group \(\Lambda\cong\Z\) in Proposition~\ref{ex:cyclic-gap}.

Products \(H\times\Z\) with \(H\) infinite and amenable give another vanishing result without the hypothesis \(b_1\ge2\).

\begin{proposition}
\label{prop:amenable-product}
Let \(H\) be an infinite amenable finitely presented group. For every finite presentation of \(H\times\Z\), its Cayley \(2\)-complex \(X\) satisfies \(\lambda_0(X)=\lambda_2^+(X)=0\).
\end{proposition}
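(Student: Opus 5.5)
By Kesten's theorem, \(\lambda_0(X)=0\), since \(H\times\Z\) is infinite and amenable; so only the face gap needs work. By Proposition~\ref{prop:presentation-independence}, vanishing of \(\lambda_2^+\) does not depend on the finite presentation. I would therefore take a finite presentation \(\langle S_H\mid R_H\rangle\) of \(H\) and use the product presentation
\[
 \cP=\bigl\langle S_H\cup\{t\}\ \big|\ R_H,\ [s,t]\ (s\in S_H)\bigr\rangle ,
\]
and show \(\lambda_2^+(X_{\cP})=0\). Elements of \(\Gamma=H\times\Z\) are written \((x,k)\).

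\textbf{Step 1: kernel.} I want to use the variational formula of Section~\ref{sec:spectral-transfer}, which requires \(\ker\partial_2=\{0\}\). The space \(\ker\partial_2\subset C_2^{(2)}(X)\) is the reduced \(\ell^2\)-homology in degree two. Its von Neumann dimension is the \(\ell^2\)-Betti number \(b_2^{(2)}\). This number vanishes for every infinite amenable group by Cheeger--Gromov \cite[Theorem~0.2]{CheegerGromov1986}; alternatively, the Künneth formula with \(b_*^{(2)}(\Z)=0\) gives the same conclusion here. The von Neumann trace is faithful, so the kernel projection is zero, i.e. \(\partial_2\) is injective. Hence
\[
 \lambda_2^+(X)=\inf_{0\ne f\in C_c}\|\partial_2f\|_2^2/\|f\|_2^2 .
\]
Relying on Cheeger--Gromov is the least elementary point. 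If one wants a self-contained route, the fallback is to show directly that the test chains below are asymptotically orthogonal to \(\ker\partial_2\).

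\textbf{Step 2: test chains.} Fix \(s\in S_H\) and put \(r_s=sts^{-1}t^{-1}\). Take a F\o lner set \(F\subset H\) and an interval \(I=\{0,\dots,L-1\}\), and define
\[
 f=\sum_{x\in F}\sum_{k\in I}F((x,k),r_s).
\]
The boundary of \(F((x,k),r_s)\) is
\[
 e((x,k),s)+e((xs,k),t)-e((x,k+1),s)-e((x,k),t).
\]
Summing over \(k\in I\), the \(s\)-edges telescope in \(k\), leaving only the levels \(k=0\) and \(k=L\); this contributes at most \(2|F|\) to \(\|\partial_2f\|^2\). For each fixed \(k\), the \(t\)-edges give \(\sum_{x\in F}\bigl(e((xs,k),t)-e((x,k),t)\bigr)\). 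This is the indicator of \(Fs\) minus that of \(F\), whose squared norm is \(|Fs\mathbin{\triangle}F|\). Hence
\[
 \frac{\|\partial_2f\|_2^2}{\|f\|_2^2}\le\frac{2|F|+L\,|Fs\mathbin{\triangle}F|}{L|F|}
 =\frac2L+\frac{|Fs\mathbin{\triangle}F|}{|F|}.
\]

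\textbf{Step 3: conclusion.} Let \(L\to\infty\) and let \(F\) run through a F\o lner sequence of \(H\); the F\o lner condition is needed only for the single element \(s\). Both terms tend to \(0\), so \(\lambda_2^+(X_{\cP})=0\). Proposition~\ref{prop:presentation-independence} then transfers this to every finite presentation of \(H\times\Z\).

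The main obstacle is Step 1, the injectivity of \(\partial_2\). As noted after \eqref{eq:closed-range}, small Rayleigh quotients by themselves say nothing if the kernel is nontrivial. Steps 2 and 3 are routine bookkeeping.
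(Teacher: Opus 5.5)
Your Step~1 is false for the presentation you chose, and without it Steps~2--3 do not bound \(\lambda_2^+\). Let \(Y\) be the Cayley complex of \(\langle S_H\mid R_H\rangle\) and \(L\) the integer line. Then the Cayley complex of your product presentation is only the \(2\)-skeleton \((Y\times L)^{(2)}\). The prisms \(Q\times[k,k+1]\) over faces \(Q\) of \(Y\) are absent, and their boundaries survive as nonzero finitely supported \(2\)-cycles. Explicitly, using \(\partial_2(e\times[k,k+1])=(\partial_1e)\times[k,k+1]-e\times(\delta_{k+1}-\delta_k)\), the chain
\[
 z=Q\times(\delta_{k+1}-\delta_k)+(\partial_2^YQ)\times[k,k+1]
\]
satisfies \(\partial_2z=0\). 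So \(\ker\partial_2\ne\{0\}\) as soon as \(R_H\ne\varnothing\).

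The error is identifying \(\ker\partial_2\), which is \(b_2^{(2)}\) of the \(2\)-dimensional complex \(X\), with \(b_2^{(2)}(\Gamma)\). The latter is computed on a contractible complex obtained from \(X\) by attaching \(3\)-cells. In fact Cheeger--Gromov gives \(b_0^{(2)}(X)=b_1^{(2)}(X)=0\), so the \(\ell^2\) Euler characteristic shows that the von Neumann dimension of \(\ker\partial_2\) is \(1-|S|+|R|=|R_H|>0\). Likewise, Künneth applies to the \(3\)-complex \(Y\times L\) and only gives \(\ker\partial_2=\overline{\ran\partial_3^{Y\times L}}\), which is nonzero.

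Consequently the variational formula is unavailable, and your chains are not orthogonal to the kernel. Indeed \(\langle f,z\rangle\) is the net incidence of \(\partial_2^YQ\) with the \(s\)-edges based in your F\o lner set. When \(Q\) lies deep inside, this is the exponent sum of \(s\) in the relator, for instance \(-1\) for \(s=b\) in \(\operatorname{BS}(1,2)\).

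Nor can you hope for asymptotic orthogonality. Steps~2--3 use only \(|Fs\mathbin{\triangle}F|/|F|\to0\), which segments of an \(s\)-line satisfy in any group. Take \(H=F_2\times F_2=\langle a,b,c,d\mid[a,c],[a,d],[b,c],[b,d]\rangle\) and \(s=a\). Your quotients still tend to \(0\). But \(X=(T_4\times T_4\times L)^{(2)}\), and the Künneth formula for Laplacians gives \(\Delta_1\ge\lambda_0(T_4)=4-2\sqrt3\). Theorem~\ref{thm:spectral-transfer} then gives \(\lambda_2^+(X)\ge4-2\sqrt3\), so the mass of your chains must concentrate in \(\ker\partial_2\).

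The missing idea is to use, in each layer, an edge chain orthogonal to the \(1\)-cycles of \(Y\), rather than an almost-cycle such as your indicator of \(s\)-edges. Since \(H\) is infinite, \(\ker\partial_1^*=0\) on \(Y\). Kesten's theorem and the exact transfer \eqref{eq:exact-transfer} then give real unit chains \(u_n\perp\ker\partial_1^Y\) with \(\|\partial_1u_n\|_2\to0\). For \(z\in\ker\partial_2\), the \(t\)-edge component of \(\partial_2z=0\) shows that each square layer of \(z\) is a cycle of \(Y\). Hence \(f_n=n^{-1/2}\sum_{k=0}^{n-1}u_n\times[k,k+1]\) is exactly orthogonal to \(\ker\partial_2\). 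Your telescoping computation then gives \(\|\partial_2f_n\|_2^2=2/n+\|\partial_1u_n\|_2^2\to0\). This is the paper's proof, and it is where amenability is genuinely used.
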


\begin{proof}
Choose any finite presentation of \(H\), and let \(Y\) be its Cayley \(2\)-complex with edge boundary operator \(\partial_1\). For \(h\in\ell^2(H)\), the adjoint satisfies \((\partial_1^*h)(e(g,s))=h(gs)-h(g)\). If \(\partial_1^*h=0\), then \(h\) has equal values at the endpoints of every edge. The Cayley graph is connected, so \(h\) is constant on \(H\). Since \(H\) is infinite, a square-summable constant function must vanish; hence \(\ker\partial_1^*=\{0\}\).

Amenability gives zero spectral bottom for \(\partial_1\partial_1^*\) by Kesten's theorem. Since \(\ker\partial_1^*=\{0\}\), Theorem~\ref{thm:spectral-transfer} transfers this zero spectral bottom from the full vertex space to \(\partial_1^*\partial_1\) on \((\ker\partial_1)^\perp\). Thus choose real unit edge chains \(u_n\perp\ker\partial_1\) with \(\|\partial_1u_n\|_2\to0\).

Write the chosen presentation as \(H=\langle S\mid R\rangle\). Add a new generator \(t\) for the \(\Z\)-factor and impose \([s,t]=1\) for every \(s\in S\), making \(t\) commute with all generators of \(H\). This gives the product presentation
\[
 H\times\Z=\langle S\cup\{t\}\mid R,\ [s,t]\ (s\in S)\rangle.
\]
The Cayley \(2\)-complex is the \(2\)-skeleton \(X=(Y\times L)^{(2)}\), where \(L\) is the integer line with \(t\)-edges \([k,k+1]\). Let \(\delta_k\) denote the unit vertex chain at \(k\in\Z\) on \(L\). For an oriented edge \(e\) of \(Y\), the square \(e\times[k,k+1]\) has boundary
\begin{equation}
\label{eq:amenable-product-boundary}
 \partial_2(e\times[k,k+1])
 =(\partial_1e)\times[k,k+1]
 -e\times(\delta_{k+1}-\delta_k).
\end{equation}
Set
\[
 f_n=n^{-1/2}\sum_{k=0}^{n-1}u_n\times[k,k+1],\qquad \|f_n\|_2=1.
\]
For \(z\in\ker\partial_2\), restrict its coefficients to the \(k\)-th layer of squares \(e\times[k,k+1]\), identifying this restriction with an edge chain \(b_k\) on \(Y\) via \(b_k(e)=z(e\times[k,k+1])\). The vertical component of \(\partial_2z=0\) in \eqref{eq:amenable-product-boundary} gives \(\partial_1b_k=0\). Thus \(u_n\perp\ker\partial_1\) yields
\[
 \langle f_n,z\rangle
 =n^{-1/2}\sum_{k=0}^{n-1}\langle u_n,b_k\rangle=0,
\]
so \(f_n\perp\ker\partial_2\). In the horizontal boundary sum from \eqref{eq:amenable-product-boundary}, contributions at intermediate levels cancel:
\[
 -n^{-1/2}u_n\times\sum_{k=0}^{n-1}(\delta_{k+1}-\delta_k)
 =n^{-1/2}u_n\times(\delta_0-\delta_n).
\]
The two horizontal end terms and the \(n\) vertical terms on distinct intervals have disjoint edge supports. For the unit chain \(f_n\perp\ker\partial_2\), the Rayleigh quotient gives
\[
\begin{aligned}
 0\le\lambda_2^+(X)
 &\le\|\partial_2f_n\|_2^2\\
 &=\frac2n\|u_n\|_2^2
   +\frac1n\sum_{k=0}^{n-1}\|\partial_1u_n\|_2^2\\
 &=\frac2n+\|\partial_1u_n\|_2^2\longrightarrow0.
\end{aligned}
\]
Proposition~\ref{prop:presentation-independence} extends the face-gap conclusion to every finite presentation of \(H\times\Z\). Kesten's theorem gives \(\lambda_0(X)=0\) since \(H\times\Z\) is amenable.
\end{proof}

Since \(b_1(H\times\Z)=b_1(H)+1\), Theorem~\ref{thm:betti-two-gap} already covers the case \(b_1(H)\ge1\). Proposition~\ref{prop:amenable-product} adds the case \(b_1(H)=0\). For example, take the infinite dihedral group \(H=\langle r,s\mid r^2,s^2\rangle\). Put \(a=rs\). Then \(H=\langle a\rangle\sqcup\langle a\rangle r\), so \([H:\langle a\rangle]=2\).

\(H\) is an infinite amenable finitely presented group. Since its presentation complex has face boundary matrix \(2I_2\), we have \(b_1(H)=0\). The product \(H\times\Z\) has a subgroup of index two isomorphic to \(\Z^2\), so it is not virtually cyclic. By Proposition~\ref{prop:amenable-product}, the Cayley \(2\)-complex \(X\) of any finite presentation of \(H\times\Z\) satisfies \(\lambda_0(X)=\lambda_2^+(X)=0\).

\section{Examples and the conjecture}
\label{sec:examples}

In Sections~\ref{sec:bs} and~\ref{sec:amenable}, we saw examples with zero vertex gap and zero face gap. We now give an example with positive vertex gap and zero face gap. This suggests that, outside the virtually cyclic case, vanishing of the face gap may be a weaker condition than vanishing of the vertex gap.

\begin{proposition}
\label{prop:one-sided-gap}
Let \(F_r\) be the free group of rank \(r\ge2\).  The Cayley \(2\)-complex of the standard presentation
\[
 F_r\times\Z
 =\langle a_1,\ldots,a_r,t\mid[a_1,t],\ldots,[a_r,t]\rangle
\]
satisfies \(\ker\partial_2=\{0\}\) and \(\lambda_2^+(X)=0<\lambda_0(X)\). There are real unit face chains \(f_N\) with \(\|\partial_2f_N\|_2^2=2/N\).
\end{proposition}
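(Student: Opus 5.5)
The plan has three parts: \(\lambda_0(X)>0\) from Kesten, injectivity of \(\partial_2\) from the product structure, and explicit test chains modelled on Proposition~\ref{prop:amenable-product}.

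\emph{Vertex gap.} The group \(F_r\times\Z\) surjects onto \(F_r\), which is nonamenable for \(r\ge2\). Hence \(F_r\times\Z\) is nonamenable, and Kesten's theorem gives \(\lambda_0(X)>0\).

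\emph{Injectivity of \(\partial_2\).} The presentation is the product presentation of Proposition~\ref{prop:amenable-product} with \(H=F_r=\langle a_1,\dots,a_r\mid\ \rangle\), which has no relators. So \(X=(Y\times L)^{(2)}\), where \(Y\) is the Cayley tree of \(F_r\). Every face has the form \(e\times[k,k+1]\) with \(e\) an edge of \(Y\), and its boundary is given by \eqref{eq:amenable-product-boundary}.

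Take \(z\in\ker\partial_2\) and write \(b_k\) for its restriction to the \(k\)-th layer. The vertical component of \(\partial_2z\) gives \(\partial_1b_k=0\) for each \(k\). Each \(b_k\) is an \(\ell^2\) edge chain on the tree \(Y\) with \(\partial_1b_k=0\). I need \(b_k=0\), and this is the step requiring a short argument.

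For an edge \(e\) of \(Y\), removing \(e\) splits \(Y\) into two components. Let \(Y_e\) be the one containing the head of \(e\). The cycle condition says the net flow into every vertex is zero. Summing this over the vertices of larger and larger finite subtrees of \(Y_e\), the flow through \(e\) equals the total flow across the outer boundary of the subtree. The number of boundary edges at depth \(n\) grows like \((2r-1)^n\), so by Cauchy--Schwarz this flow is at most \((2r-1)^{n/2}\) times the \(\ell^2\) norm of \(b_k\) on those edges.

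The cleaner route is the standard fact that trees have no nonzero \(\ell^2\) cycles: for the regular tree, \(\ker\partial_1\) in \(\ell^2\) is the first \(\ell^2\) homology of a contractible free \(F_r\)-complex of dimension one, which vanishes since \(b_1^{(2)}(F_r)=r-1\) is realized by harmonic cochains, not cycles. A more hands-on version: \(\Delta_1|_{\ker\partial_1}=\partial_2\partial_2^*\) is zero on \(Y\) because \(Y\) has no faces. So \(\ker\partial_1=\mathcal H(Y)\), and \(\dim_{F_r}\mathcal H(Y)=b_1^{(2)}(F_r)=r-1\ne0\). This does not give vanishing, so I must be careful here; I return to this below.

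\emph{Test chains.} Choose a unit edge chain \(u\) on \(Y\) with \(\partial_1u\) small. The simplest choice is a single oriented edge \(e_0\), which gives \(\|\partial_1u\|^2=2\). To get exactly \(2/N\), I instead take \(u\in\ker\partial_1\), if such a unit \(\ell^2\) cycle exists. Set \(f_N=N^{-1/2}\sum_{k<N}u\times[k,k+1]\). By the telescoping computation in Proposition~\ref{prop:amenable-product}, \(\|\partial_2f_N\|^2=2/N+\|\partial_1u\|^2=2/N\).

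\emph{Tension and resolution.} This reveals the tension noted above: a unit cycle \(u\) would give \(\partial_2(u\times\text{everything})\) with vanishing vertical part. So I must decide whether \(\ker\partial_1(Y)\) is nonzero in \(\ell^2\). I now claim it is. Take the unit flow from one end of the tree to another, splitting the flow evenly at each vertex; this decays like \((2r-1)^{-n}\) in value over \((2r-1)^n\) edges, so it is square-summable. Hence nonzero \(\ell^2\) cycles exist, and the formula \(2/N\) follows, giving \(\lambda_2^+(X)=0\) via the variational formula once injectivity is known.

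With \(\ker\partial_1(Y)\neq0\), however, injectivity must come from the horizontal components, not from the \(b_k\). The horizontal part of \(\partial_2z\) at level \(k\) is \(b_{k-1}-b_k\) (up to sign). So \(\partial_2z=0\) forces \(b_k\) to be independent of \(k\). Square-summability over infinitely many \(k\) then forces \(b_k=0\), hence \(z=0\). This is the same argument as in Lemma~\ref{lem:bs-boundary}.

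The main obstacle is sorting out this interplay correctly: the horizontal constraint gives injectivity, and nonzero \(\ell^2\) cycles on the tree give the test chains.
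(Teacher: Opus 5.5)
Your final argument is correct and is essentially the paper's proof: injectivity comes from the horizontal components forcing $w_k=w_{k-1}$ together with square-summability in $k$; the test chains are $N^{-1/2}$ times a unit $\ell^2$ tree cycle on $N$ consecutive layers, and the paper's cycle is exactly your evenly split flow ($\pm1$ on two edges at a root, divided equally among the $2r-1$ forward edges); and $\lambda_0(X)>0$ follows from Kesten's theorem, via the quotient $F_r$ in your version and the subgroup $F_r$ in the paper's, either of which works. In a write-up, delete the abandoned detour, in particular the false assertion that trees carry no nonzero $\ell^2$ cycles, and describe the flow by the rooted splitting rule rather than as a flow ``from one end to another,'' so that balance at every vertex, including the root, is evident.
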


\begin{proof}
The complex is \(X=T_{2r}\times L\), where \(T_{2r}\) is the \(2r\)-regular tree and \(L\) is the line with oriented edges \([k,k+1]\), \(k\in\Z\). Its faces are the squares \(e\times[k,k+1]\). Write a face chain as \(w=(w_k)_{k\in\Z}\), with \(w_k\in C_1^{(2)}(T_{2r})\). Here \(w_k(e)\) is the coefficient of the face \(e\times[k,k+1]\) in \(w\). Then \(\|w\|_2^2=\sum_k\|w_k\|_2^2\).

Let \(\partial_T\) be the edge boundary on the tree. A square has boundary
\[
 \partial_2(e\times[k,k+1])
 =(\partial_Te)\times[k,k+1]-e\times(\delta_{k+1}-\delta_k).
\]
The vertical and horizontal edges are disjoint, so
\begin{equation}
\label{eq:product-boundary-norm}
 \|\partial_2w\|_2^2
 =\sum_k\|\partial_Tw_k\|_2^2+
   \sum_k\|w_k-w_{k-1}\|_2^2.
\end{equation}
If \(\partial_2w=0\), the second sum gives \(w_k=w_{k-1}\) for every \(k\). Square summability then forces \(w=0\), so \(\partial_2\) is injective.

We next construct a nonzero square-summable tree flow with zero boundary. Orient the tree away from a root and assign coefficients \(1\) and \(-1\) to two edges leaving the root, with zero on the other branches. At each later vertex, divide the incoming coefficient equally among the \(2r-1\) outgoing edges. The coefficients balance at every vertex. Its squared norm is
\[
 2\sum_{j\ge0}(2r-1)^j(2r-1)^{-2j}
 =2\sum_{j\ge0}(2r-1)^{-j}<\infty.
\]
Normalize this flow to obtain a real unit vector \(u\in\ker\partial_T\). Set
\[
 (f_N)_k=N^{-1/2}\mathbf1_{\{0,\ldots,N-1\}}(k)u.
\]
Since \(\partial_Tu=0\), equation~\eqref{eq:product-boundary-norm} gives
\[
 \|f_N\|_2^2=1,\qquad
 \|\partial_2f_N\|_2^2=\frac2N,\qquad
 0\le\lambda_2^+(X)\le\frac2N\longrightarrow0.
\]
Finally, \(F_r\times\Z\) contains the nonamenable subgroup \(F_r\), so Kesten's theorem gives \(\lambda_0(X)>0\) \cite{KestenMean1959}.
\end{proof}

The next example explains the virtually cyclic exception in Conjecture~\ref{conj:coexact-implies-vertex}.

\begin{proposition}
\label{ex:cyclic-gap}
The Cayley \(2\)-complex of
\[
 \Lambda=\langle a,b\mid ba^{-1}\rangle\cong\Z
\]
has \(\lambda_0(X)=0\) and \(\lambda_2^+(X)=2\).
\end{proposition}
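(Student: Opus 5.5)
Two things need proving: the vertex gap vanishes, and the face gap equals \(2\). The vertex gap is immediate: \(\Lambda\cong\Z\) is infinite and amenable, so Kesten's theorem gives \(\lambda_0(X)=0\). Alternatively, normalized indicators of long intervals in the Cayley graph have vanishing Rayleigh quotient for \(\Delta_0\).

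For the face gap, we compute \(\Delta_2\) explicitly through Theorem~\ref{thm:dual-formula}. Identify \(\Lambda\) with \(\Z\) so that \(a\) and \(b\) both act as \(+1\), since \(b=a\) in \(\Lambda\). The relator \(r=ba^{-1}\) has length two. The face \(F(g)=F(g,r)\) starts at \(g\), traverses \(e(g,b)\colon g\to g+1\) forward, and then traverses \(e(g,a)\colon g\to g+1\) backward. It is therefore a bigon with
\[
 \eta(e(g,b),F(g))=1,\qquad \eta(e(g,a),F(g))=-1,
\]
and no other incidences. Each edge is incident with exactly one face, so the dual graph \(X^\vee\) has no edges. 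This gives \(d^\vee=0\), \(q=2\), \(V=2\), and hence \(\Delta_2=2I\) on \(\ell^2(\Z)\). Equivalently, \(\|\partial_2f\|_2^2=2\|f\|_2^2\), because the boundaries of distinct faces have disjoint edge supports.

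It follows that \(\spec(\Delta_2)=\{2\}\), \(\ker\partial_2=\{0\}\), and \(\lambda_2^+(X)=2\). The coexact gap also equals \(2\) by~\eqref{eq:bottom-identity}.

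The only point needing care is the bookkeeping of lifts. One must check that the \(b\)-edge and the \(a\)-edge leaving \(g\) both end at \(g+1\), so that the lifted boundary is a closed loop. One must also check that the two traversals lie on distinct edges and so do not cancel. Both follow directly from the parametrization in Section~\ref{sec:cayley}.

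I expect no real obstacle. Perhaps worth remarking: the presentation \(\langle a\mid\ \rangle\) of \(\Z\) has no faces, so its face gap is \(\inf\varnothing=+\infty\). This illustrates that the numerical value of the gap depends on the presentation, while its positivity does not (Proposition~\ref{prop:presentation-independence}).
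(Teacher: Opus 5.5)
Your proposal is correct and follows the paper's argument: the face at \(g\) has boundary \(e(g,b)-e(g,a)\), so \(\Delta_2=2I\). You read this off the dual formula with an edgeless dual graph and \(V=2\), whereas the paper reads it directly from \(\partial_2 f=(-f,f)\). For the vertex gap, the paper uses exactly your alternative, the indicators \(\mathbf 1_{\{0,\ldots,N-1\}}\) on the doubled-edge line, which give Rayleigh quotient \(4/N\), rather than citing Kesten's theorem.
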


\begin{proof}
Although \(a\) and \(b\) represent the same element of \(\Lambda\), they label distinct edges. The face at \(g\) has boundary \(e(g,b)-e(g,a)\), so
\[
 \partial_2f=(-f,f),\qquad
 \partial_2^*\partial_2=2I.
\]
The vertex graph is a line with doubled edges. Taking \(f_N=\mathbf1_{\{0,\ldots,N-1\}}\) gives
\[
 \|f_N\|_2^2=N,\qquad
 \|\partial_1^*f_N\|_2^2=4,\qquad
 0\le\lambda_0(X)\le\frac4N\longrightarrow0.
\]
\end{proof}

For the standard presentation \(\Z=\langle a\mid\varnothing\rangle\), the Cayley graph has no faces, so the positive face spectrum is empty and the face gap is \(+\infty\) by convention. The presentation above has face gap \(2\). Both gaps are positive, illustrating Proposition~\ref{prop:presentation-independence}.

These examples motivate Conjecture~\ref{conj:coexact-implies-vertex}: for an infinite finitely presented group that is not virtually cyclic, a positive face gap should imply a positive vertex gap.

\section*{Acknowledgement}

B.~Hua is supported by NSFC, no.12371056.
Y.~Lin is supported by NSFC, nos.12071245 and 12471088.
T.~Wang is partially supported by the National Key R\&D Program of China, no.~2025YFA1017500.
H.~Zhang was partially funded by the Research Council of Finland Center of Excellence FiRST (project number~364217).
The author B.~Hua is grateful to Max Planck Institute for Mathematics in Bonn
for its hospitality and financial support.

% Leave room for the additional affiliation in the final address block.
\enlargethispage{2\baselineskip}
\end{document}